\documentclass[letter]{amsart}

\usepackage[latin1]{inputenc}
\usepackage{subfigure}
\usepackage{enumitem}
\usepackage{graphicx}

\addtolength{\oddsidemargin}{-.5in}
\addtolength{\evensidemargin}{-.5in}
\addtolength{\textwidth}{.8in}

\usepackage{amssymb,stmaryrd,comment,enumitem, amsmath, amsthm}

\newtheorem{theorem}{Theorem}
\newtheorem{lemma}[theorem]{Lemma}
\newtheorem{proposition}[theorem]{Proposition}
\newtheorem{corollary}[theorem]{Corollary}

\newtheorem{remark}{Remark}

\author{Sean Carrell}\thanks{S.C.: Department of Combinatorics \& Optimization, University of Waterloo, Waterloo, Canada. Email: {\tt s.r.carrell@gmail.com}}
\author{Guillaume Chapuy}\thanks{G.C.: CNRS and LIAFA, Universit\'e Paris Diderot -- Paris 7, Paris, France. Partial support from
French \emph{Agence
Nationale de la Recherche}, grant number ANR 12-JS02-001-01 ''Cartaplus'', and from
\emph{Ville de Paris}, grant ''\'Emergences Paris 2013, Combinatoire \`a
Paris''. Email:
{\tt guillaume.chapuy@liafa.univ-paris-diderot.fr}.}

\begin{document}
\title[Simple recurrence formulas to count maps on orientable surfaces]{Simple recurrence formulas to count maps on orientable surfaces.
}

\maketitle

\begin{abstract}
We establish a simple recurrence formula for the number $Q_g^n$
of rooted orientable maps counted by edges and genus.
We also give a weighted variant for the generating polynomial $Q_g^n(x)$ where $x$ is a parameter taking the number of faces of the map into account, or equivalently a simple recurrence formula for the refined numbers $M_g^{i,j}$ that count maps by genus, vertices, and faces.
These formulas give
by far the fastest known way of computing these numbers, or the fixed-genus
generating functions, especially for large~$g$. 
In the very particular case of one-face maps, we recover the Harer-Zagier recurrence formula.

Our main formula is a 
consequence of the KP equation for the generating function of bipartite maps,
coupled with a Tutte equation, and it was apparently unnoticed before.
It is similar in look to the one discovered by Goulden and Jackson for
triangulations, and indeed our method to go from the KP equation to the recurrence formula can be seen as a combinatorial simplification of Goulden and Jackson's approach (together with one additional combinatorial trick).
All these  formulas have a very combinatorial flavour, but finding a
bijective interpretation is currently unsolved.
\end{abstract}

\section{Introduction and main results}

A \emph{map} is a connected graph embedded in a compact connected orientable
surface in such a way that the regions delimited by the graph, called
\emph{faces}, are homeomorphic to open discs. Loops and multiple edges are
allowed. A \emph{rooted} map is a map in which an angular sector incident to a
vertex is distinguished, 
and the latter is called the \emph{root vertex}. The \emph{root edge} is the edge encountered
when traversing the distinguished angular sector clockwise around the root vertex. 
Rooted maps are considered up to oriented
homeomorphisms preserving the root sector.  

A map is \emph{bipartite} if its vertices can be coloured with two colors, say
black and white, in such a way that each edge links 
a white and a black vertex.
Unless otherwise mentioned, bipartite maps will be endowed with their
\emph{canonical bicolouration} in which the root vertex is coloured white.
The \emph{degree} of a face in a map is equal to the number of edge sides along
its boundary, counted with multiplicity. 
Note that in a bipartite map every face has even degree, since colours
alternate along its boundary.

A \emph{quadrangulation} is a map in which every face has degree~$4$. 
 There is a classical
bijection,
that goes back to Tutte~\cite{Tutte:census}, between bipartite
quadrangulations with $n$ faces and genus $g$, and rooted maps with $n$ edges
and genus $g$. It is illustrated on Figure~\ref{fig:Tutte}.
This bijection transports the number of faces of the map to the number of white
vertices of the quadrangulation (in the canonical bicolouration).
\medskip

For $g,n\geq 0$, we let $Q_g^n$ be the number of rooted bipartite quadrangulations of genus $g$
with $n$ faces. Equivalently, by Tutte's construction, $Q_g^n$ is the number of 
rooted maps of genus $g$ with $n$ edges. 
By convention we admit a single map with no edges and which has genus zero, one face, and one vertex.
Our first result is the following recurrence formula:
\begin{theorem}\label{thm:main}
The number $Q_g^n$ of rooted maps of genus $g$ with $n$ edges 
(which is also the number of rooted bipartite quadrangulations of genus $g$ with $n$ faces) satisfies the
following recurrence relation:
\begin{multline*}
\!\!
\!\!
\!\!
\frac{n+1}{6} Q_g^n = \frac{4n-2}{3} 
  Q_g^{ n-1 } + \frac{ ( 2n-3 )  ( 2n-2 )  ( 2n-1 ) }{12}Q_{g-1}^{ n-2 } 
+
\frac{1}{2}\sum _{k+\ell=n\atop k,\ell\geq 1}  \sum _{ i+j=g\atop i,j\geq0}
 ( 2k-1 )  ( 2\ell-1 )   Q_{i}^{k-1}  Q_{j}^{ \ell-1}, 
\end{multline*}
for $n\geq 1$, with the initial conditions $Q_g^0={\bf 1}_{\{g=0\}}$, and $Q_g^n=0$ if $g<0$ or $n<0$.
\end{theorem}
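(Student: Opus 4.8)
The plan is to derive the recurrence by specialising, to quadrangulations, the first nontrivial partial differential equation of the KP hierarchy satisfied by the generating function of bipartite maps, and then to use a root-edge (Tutte) equation to eliminate the faces of degree different from $4$ that this specialisation inevitably produces. First I would introduce the generating function $\mathbf{F}$ of connected bipartite maps in which every face of degree $2i$ carries a formal variable $p_i$, together with variables recording the genus and the number of edges (equivalently, of vertices). It is a classical fact, coming from the interpretation of such series as matrix integrals or as Schur-function expansions, that $\exp \mathbf{F}$ is a tau function of the KP hierarchy in the times $t_i = p_i/i$. Consequently $\mathbf{F}$ satisfies the first KP equation, which in the $p$-variables reads
\[
\frac{\partial^2 \mathbf{F}}{\partial p_1\,\partial p_3}
=\frac{\partial^2 \mathbf{F}}{\partial p_2^{\,2}}
+\frac12\left(\frac{\partial^2 \mathbf{F}}{\partial p_1^{\,2}}\right)^{2}
+\frac{1}{12}\frac{\partial^4 \mathbf{F}}{\partial p_1^{\,4}}.
\]

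I would then read each derivative combinatorially: $\partial_{p_i}$ marks (and forgets the weight of) one face of degree $2i$, so the left-hand side selects bipartite maps carrying one marked digon and one marked hexagon, or two marked squares, while the right-hand side produces one connected map with four marked digons, or a pair of connected maps each carrying two marked digons. The quadrangulation specialisation is obtained by setting $p_i=0$ for $i\neq 2$ after differentiation and extracting the coefficient of $p_2^{\,n-2}$ in genus $g$, which across all terms counts objects with exactly $n$ faces and $2n$ edges.

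With this dictionary the three right-hand terms fall into place. The term $\tfrac{1}{12}\partial_{p_1}^4\mathbf{F}$ reabsorbs four digons while creating a handle, and so should contribute $\tfrac{(2n-3)(2n-2)(2n-1)}{12}Q_{g-1}^{n-2}$, the cubic factor being the number of surgery sites; note the matching $\tfrac{1}{12}$ and the Harer--Zagier flavour of the polynomial. The quadratic term $\tfrac12(\partial_{p_1}^2\mathbf{F})^2$ separates the surface into two components, and since a connected map with two marked digons and $k-1$ squares should reorganise into $(2k-1)$ copies of a quadrangulation of $k-1$ faces, it yields exactly the genus- and size-convolution $\tfrac12\sum_{k+\ell=n}\sum_{i+j=g}(2k-1)(2\ell-1)Q_i^{k-1}Q_j^{\ell-1}$. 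Finally the two linear terms $\partial_{p_1}\partial_{p_3}\mathbf{F}-\partial_{p_2}^2\mathbf{F}$ must be turned into $\tfrac{n+1}{6}Q_g^n-\tfrac{4n-2}{3}Q_g^{n-1}$.

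The main obstacle, and the real content beyond quoting the KP equation, is precisely this translation of the terms that are linear in $\mathbf{F}$: I would need a Tutte/root-edge surgery showing how a bipartite map carrying one marked non-square face (a digon, a hexagon, or an extra square) can be reorganised into a genuine quadrangulation of controlled size, and, crucially, with the exact integer multiplicities $2k-1$, $2n-1$, and the cubic factor above. The odd numbers should emerge as counts of corners available for the surgery once the root is fixed, and the delicate bookkeeping is to match these multiplicities and the genus shifts against the rational coefficients $\tfrac{n+1}{6}$, $\tfrac{4n-2}{3}$, $\tfrac{1}{12}$ dictated by the KP normalisation; this is presumably where the ``additional combinatorial trick'' advertised in the abstract is needed. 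Once these identities are established, collecting the coefficient of $p_2^{\,n-2}$ at genus $g$ in the specialised KP equation yields the stated recurrence, and the base cases $Q_g^0=\mathbf{1}_{\{g=0\}}$ are immediate from the empty-map convention.
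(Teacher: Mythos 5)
Your strategy is exactly the paper's: invoke the first KP equation for the bipartite-map series (Proposition~\ref{prop:KP}), specialise to quadrangulations, read each term as a count of quadrangulations decorated with a few marked small faces, and use a root-edge equation to dispose of the hexagon. You have also correctly predicted every multiplicity, including the falling factorial $(2n-1)(2n-2)(2n-3)$ and the convolution factors $(2k-1)(2\ell-1)$. The gap is that the proposal stops precisely where the paper's actual work begins: the four identities you describe (the paper's Lemma~\ref{lemma:only}) are introduced with ``should contribute'' and ``I would need a surgery showing\dots'' and are never established. Two derivations are missing. First, for the digon terms one must prove that the generating count $P_h^{m,\ell}$ of rooted bipartite maps of genus $h$ with $m$ edges, $\ell$ marked faces of degree $2$ and all other faces of degree $4$ satisfies $P_h^{m,\ell}=m(m-1)\cdots(m-\ell+1)\,Q_h^{(m-\ell)/2}$; the paper gets this by contracting the marked digons one at a time into root edges and counting the $m$ possible re-expansions, which is where the odd and even integer factors actually come from. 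Second, and this is the crux you explicitly defer, the hexagon term requires the Tutte equation~\eqref{eq:tutteHexa}: with $X_g^n$ the number of rooted bipartite maps having one hexagonal face, all other faces quadrangles, and $2n-1$ edges, one needs $Q_g^n=\tfrac{3}{2n-1}X_g^n+2\,Q_g^{n-1}$, obtained by deleting the root edge of a quadrangulation and separating the case where it borders two distinct faces (merging them into a hexagon; the $3$ counts the diagonals of a hexagon and $2n-1$ the choices of root corner) from the case of a pendant edge inside a digon (yielding $2\,Q_g^{n-1}$). A minor further imprecision: the genus drop in the $\tfrac1{12}H_{1^4}$ term is not produced by a ``handle-creating surgery'' but falls out of the Euler-characteristic bookkeeping once the coefficient is extracted at a fixed number of vertices. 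None of these steps would fail---the paper carries them all out---but as written your argument is a correct plan rather than a proof.
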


We actually prove a more general result, where in addition to edges and genus, we also control the number of faces of the map. Let $x$ be a formal variable, and let $Q_g^n(x)$ be the generating polynomial of maps of genus $g$ with $n$ edges, where the exponent of $x$ records the number of faces of the map:
\begin{eqnarray}\label{eq:qngx}
Q_g^n(x) := \sum_{\mathfrak{m}} x^{\#\mbox{faces of }\mathfrak{m}},
\end{eqnarray}
where the sum is taken over rooted maps of genus $g$ with $n$ edges. We then have the following generalization of Theorem~\ref{thm:main}:
\begin{theorem}\label{thm:mainx}
The generating polynomial $Q_g^n(x)$ of rooted maps of genus $g$ with $n$ edges and a weight $x$ per face
(which is also the generating polynomial of rooted bipartite quadrangulations of
genus $g$ with $n$ faces with a weight $x$ per white vertex) satisfies the
following recurrence relation:
\begin{multline*}
\!\!
\!\!
\!\!
\frac{n+1}{6} Q_g^n (x) = \frac{(1+x)(2n-1)}{3} 
  Q_g^{ n-1 }(x) + \frac{ ( 2n-3 )  ( 2n-2 )  ( 2n-1 ) }{12}Q_{g-1}^{ n-2 }(x) \\ 
+
\frac{1}{2}\sum _{k+\ell=n\atop k,\ell\geq 1}  \sum _{ i+j=g\atop i,j\geq0}
 ( 2k-1 )  ( 2\ell-1 )   Q_{i}^{k-1}(x)  Q_{j}^{ \ell-1} (x) , 
\end{multline*}
for $n\geq 1$, with the initial conditions $Q_g^0(x)=x\cdot {\bf 1}_{\{g=0\}}$, and $Q_g^n=0$ if $g<0$ or $n<0$.
\end{theorem}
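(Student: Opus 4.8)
The plan is to derive the recurrence from the classical fact that a suitable generating function of bipartite maps is the free energy of a KP tau-function, and then to turn the resulting partial differential equation into a statement purely about quadrangulations by means of an elementary root-edge (Tutte-type) surgery. First I would set up the generating function $F=F(p_1,p_2,p_3,\dots)$ of connected bipartite maps in which a face of degree $2i$ carries the power-sum weight $p_i$, together with the variable $x$ recording white vertices and a grading by genus (equivalently, by the number of black vertices via Euler's relation). With this convention $Q_g^n(x)$ is the coefficient of $p_2^{\,n}$ in the all-quadrangular specialisation $p_1=p_3=\dots=0$, read off at genus $g$. Invoking the classical fact that $F$ is a KP free energy in the times $t_i=p_i/i$, the first equation of the hierarchy reads, in the $p$-variables,
\begin{equation*}
\frac{\partial^2 F}{\partial p_1\,\partial p_3}
= \frac{\partial^2 F}{\partial p_2^2}
+ \frac{1}{2}\left(\frac{\partial^2 F}{\partial p_1^2}\right)^2
+ \frac{1}{12}\,\frac{\partial^4 F}{\partial p_1^4}.
\end{equation*}
Each operator $\partial/\partial p_i$ marks (and drops the weight of) a face of degree $2i$, so the four terms are generating functions of bipartite maps carrying one or two \emph{marked} faces of prescribed degrees $2,4,6$.

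Second, I would apply these operators, then set $p_1=p_3=0$ and keep only the $p_2$-grading, so that every \emph{unmarked} face is forced to be a quadrangle, and finally extract the coefficient of $p_2^{\,n}$ at genus $g$. This converts the PDE into a linear relation among a handful of explicit series: one marked quadrangular face (from $\partial_{p_2}^2F$), one marked digon and one marked hexagon (from $\partial_{p_1}\partial_{p_3}F$), the quartic term, and the quadratic convolution. Matching the shape of the theorem, I expect the following attributions, to be confirmed by the surgery below. The ``diagonal'' contributions in which the operations return a genuine quadrangulation with $n$ faces and genus $g$ come from both $\partial_{p_1}\partial_{p_3}F$ and $\partial_{p_2}^2F$; collected on the left they should yield the coefficient $\tfrac{n+1}{6}Q_g^n(x)$. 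The top term $\tfrac{1}{12}\partial_{p_1}^4F$ should give the genus-lowering term $\tfrac{(2n-3)(2n-2)(2n-1)}{12}Q_{g-1}^{n-2}(x)$, the prefactor $\tfrac1{12}$ and the falling-factorial weight being characteristic of the quartic $p_1$-derivative; and the quadratic term $\tfrac12(\partial_{p_1}^2F)^2$ should give the convolution $\tfrac12\sum (2k-1)(2\ell-1)Q_i^{k-1}(x)Q_j^{\ell-1}(x)$, the product reflecting that the corresponding surgery disconnects the map into two independent pieces with $i+j=g$ and $k+\ell=n$.

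Third, and this is the combinatorial crux, I would re-express the non-quadrangular series (marked digon and marked hexagon) in terms of the $Q_g^\bullet(x)$ themselves. A marked digon is bounded by a double edge that can be collapsed by a Tutte-type move, identifying it with a quadrangulation having one fewer face; a marked hexagon can be split into two quadrangular faces by inserting an edge together with a new vertex, again reducing to quadrangulations with one fewer face. It is precisely these moves that should produce the $Q_g^{n-1}(x)$ term and explain the weight $(1+x)$ in the coefficient $\tfrac{(1+x)(2n-1)}{3}$: the vertex created (or destroyed) by the surgery is white (weight $x$) in one case and black (weight $1$) in the other, and the two contributions add; specialising $x=1$ returns the coefficient $\tfrac{4n-2}{3}$ of Theorem~\ref{thm:main}.

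The main obstacle I expect is exactly this last bookkeeping. The difficulty is not the KP equation itself but establishing the digon-collapse and hexagon-split bijections with the correct root conventions, controlling the rooting and automorphism factors that marking a face introduces, and — most delicately — tracking the white-vertex weight $x$ through every operation so that both the $(1+x)$ factor and the precise rational coefficients $\tfrac{n+1}{6}$, $\tfrac{(1+x)(2n-1)}{3}$, $\tfrac{(2n-3)(2n-2)(2n-1)}{12}$, and $\tfrac12(2k-1)(2\ell-1)$ emerge with the right values. Once these identifications are in place, reading off the coefficient at genus $g$ and $n$ faces should give the stated recurrence directly, and setting $x=1$ recovers Theorem~\ref{thm:main}.
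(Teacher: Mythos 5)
Your overall strategy is exactly the one the paper follows: take the KP equation $-H_{3,1}+H_{2,2}+\tfrac1{12}H_{1^4}+\tfrac12(H_{1,1})^2=0$ for the generating function of bipartite maps, specialize so that all unmarked faces are quadrangles, interpret each derivative as marking faces of degree $2$, $4$ or $6$, and reduce everything to the numbers $Q_g^n(x)$ by local surgeries. Your attributions of the $\tfrac{n+1}6$, $\tfrac1{12}(2n-1)(2n-2)(2n-3)$ and convolution terms are the right ones (in the paper these are Lemma~\ref{lemma:only}, equations \eqref{eq:22}, \eqref{eq:1111}, \eqref{eq:11}).

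However, the step you yourself identify as the crux --- converting the mixed term $\theta H_{3,1}$ (one marked hexagon, one marked digon) into quadrangulation numbers --- is not correctly sketched, and as described it would not close the recurrence. Splitting a hexagon into two quadrangles is done by a \emph{diagonal} joining two existing opposite-colour vertices of the hexagon (three choices); it creates no new vertex, and it turns a map with $2n-1$ edges and one hexagonal face into a quadrangulation with $n$ faces, i.e.\ it relates the hexagon series to $Q_g^n(x)$, not to $Q_g^{n-1}(x)$. Your proposed move of ``inserting an edge together with a new vertex'' changes the Euler characteristic bookkeeping and does not produce two quadrangles from a hexagon. More importantly, the $(1+x)\,Q_g^{n-1}(x)$ term does not come from the hexagon or from the marked-digon collapse at all (the digon collapse preserves the number of white vertices, and $H_{1,1}$ only enters through its square). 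It comes from the \emph{second, degenerate case} of the root-edge deletion in a quadrangulation: the case where the root edge is bordered twice by the same face, hence is a pendant edge whose removal leaves a face of degree $2$; contracting that digon gives a quadrangulation with $n-1$ faces, and the pendant vertex being white or black is what produces the factor $(1+x)$. This is the Tutte equation
\begin{equation*}
Q_g^n(x)\;=\;\frac{3}{2n-1}\,X_g^{n}(x)\;+\;(1+x)\,Q_g^{n-1}(x),
\end{equation*}
where $X_g^n(x)$ counts maps with one hexagonal face, all others quadrangular, and $2n-1$ edges (equation \eqref{eq:tutteHexa} in the paper). Without this two-case dichotomy the hexagon term cannot be expressed in terms of the $Q$'s, so you should supply this Tutte equation explicitly; once you do, the rest of your plan goes through as in the paper.
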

Of course, Theorem~\ref{thm:main} is a straightforward corollary of Theorem~\ref{thm:mainx} (it just corresponds to the case $x=1$).
By extracting the coefficient of $x^f$ in Theorem~\ref{thm:mainx}, for $f\geq
1$, we obtain yet another corollary that enables one to count maps by edges,
vertices, and genus:
\begin{corollary}\label{cor:three}
The number $Q_g^{n,f}$ of  rooted maps of genus $g$ with $n$ edges and $f$ faces
(which is also the number of rooted bipartite quadrangulations of genus $g$ with $n$ faces and $f$ white vertices) satisfies the
following recurrence relation:
\begin{multline*}
\!\!
\!\!
\!\!
\frac{n+1}{6} Q_g^{n,f}  = 
\frac{(2n-1)}{3}   Q_g^{ n-1,f }
+
\frac{(2n-1)}{3}   Q_g^{ n-1,f-1 }
 + \frac{ ( 2n-3 )  ( 2n-2 )  ( 2n-1 ) }{12}Q_{g-1}^{ n-2,f } \\ 
+
\frac{1}{2}\sum _{k+\ell=n\atop k,\ell\geq 1} \sum_{u+v=f, \atop u,v \geq 1}
  \sum _{ i+j=g\atop i,j\geq0}
( 2k-1 )  ( 2\ell-1 )   Q_{i}^{k-1,u}  Q_{j}^{ \ell-1,v} , 
\end{multline*}
for $n,f\geq 1$, with the initial conditions $Q_g^{0,f}={\bf 1}_{\{(g,f)=(0,1)\}}$ and $Q_g^{n,f}=0$ whenever $f,g$, or $n$ is negative.
\end{corollary}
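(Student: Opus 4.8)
The plan is to deduce the corollary directly from Theorem~\ref{thm:mainx} by extracting the coefficient of $x^f$ from both sides of the weighted recurrence, viewing each $Q_g^n(x)$ as the polynomial $\sum_{f\geq 1} Q_g^{n,f}x^f$. The one structural fact I would record first is that every map has at least one face (a connected graph embedded in a surface always delimits at least one region, and the empty map is assigned one face), so that $Q_g^n(x)$ carries no constant term. This is precisely what will legitimize the restriction $u,v\geq 1$ appearing in the final triple sum, and it also makes each $Q_g^n(x)$ a genuine polynomial, so all the formal-power-series manipulations below reduce to finite sums.

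With this in place the extraction is purely termwise. On the left, $[x^f]\,\tfrac{n+1}{6}Q_g^n(x)=\tfrac{n+1}{6}Q_g^{n,f}$, matching the left-hand side of the statement. The only term needing a small observation is the first one on the right: since $[x^f]\big((1+x)Q_g^{n-1}(x)\big)=Q_g^{n-1,f}+Q_g^{n-1,f-1}$, the single weighted term $\tfrac{(1+x)(2n-1)}{3}Q_g^{n-1}(x)$ splits into the two terms $\tfrac{2n-1}{3}Q_g^{n-1,f}$ and $\tfrac{2n-1}{3}Q_g^{n-1,f-1}$ of the corollary. The middle term contributes its coefficient $\tfrac{(2n-3)(2n-2)(2n-1)}{12}Q_{g-1}^{n-2,f}$ unchanged.

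For the quadratic term I would invoke the Cauchy product of (formal) power series: for each fixed pair $(k,\ell)$ with $k+\ell=n$ and each $(i,j)$ with $i+j=g$,
\[
[x^f]\big(Q_i^{k-1}(x)\,Q_j^{\ell-1}(x)\big)=\sum_{u+v=f} Q_i^{k-1,u}\,Q_j^{\ell-1,v},
\]
and because neither factor has a constant term the inner indices may be restricted to $u,v\geq 1$. Summing against the weights $(2k-1)(2\ell-1)$ over $k+\ell=n$ and $i+j=g$, with the overall factor $\tfrac12$ preserved, reproduces the triple sum of the corollary verbatim. To finish I would check the initial data: $[x^f]$ of $Q_g^0(x)=x\cdot\mathbf{1}_{\{g=0\}}$ gives $Q_g^{0,f}=\mathbf{1}_{\{(g,f)=(0,1)\}}$, and the vanishing of $Q_g^{n,f}$ for negative $f$, $g$, or $n$ is inherited from the corresponding vanishing of $Q_g^n(x)$ together with the absence of a constant term.

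I do not expect a genuine obstacle here, since the whole argument is a routine coefficient extraction from an already-proved identity. The only points demanding care are bookkeeping ones: justifying the range $u,v\geq 1$ through the absence of a constant term, and correctly unfolding the $(1+x)$ factor so that it yields exactly the two separate $Q_g^{n-1,\bullet}$ contributions.
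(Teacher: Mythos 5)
Your proposal is correct and is exactly the paper's argument: the authors obtain Corollary~\ref{cor:three} by extracting the coefficient of $x^f$ in Theorem~\ref{thm:mainx}, and your careful bookkeeping (splitting the $(1+x)$ factor, the Cauchy product with $u,v\geq 1$ justified by the absence of a constant term, and the initial conditions) fills in the routine details they leave implicit.
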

\noindent Corollary~\ref{cor:three} has interesting specializations when the number of faces $f$ is small. In particular, when $f=1$, the equation becomes linear, and one recovers the celebrated Harer-Zagier formula (\cite{HZ}, see \cite{CFF} for a bijective proof):
\begin{corollary}[Harer-Zagier recurrence formula, \cite{HZ}]\label{cor:HZ}
The number $\epsilon_g(n)=Q_g^{n,1}$ of  rooted maps of genus $g$ with $n$ edges and one face
satisfies the
following recurrence relation:
\begin{align*}
\frac{n+1}{6} \epsilon_g(n) = 
\frac{(2n-1)}{3}   \epsilon_g(n-1)
 + \frac{ ( 2n-3 )  ( 2n-2 )  ( 2n-1 ) }{12}\epsilon_{g-1}(n-2) , 
\end{align*}
with the initial conditions $\epsilon_g(0)={\bf 1}_{\{g=0\}}$ and $\epsilon_g(n)=0$ if $n<0$ or $g<0$.
\end{corollary}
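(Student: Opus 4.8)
The plan is to obtain Corollary~\ref{cor:HZ} as the $f=1$ specialization of Corollary~\ref{cor:three}, so that the whole argument reduces to tracking which terms on the right-hand side survive when we set $f=1$. Writing $\epsilon_g(n)=Q_g^{n,1}$, I would substitute $f=1$ directly into the recurrence of Corollary~\ref{cor:three} and examine its four terms in turn.

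First, the quadruple sum contains an inner summation over pairs $(u,v)$ with $u+v=f=1$ and $u,v\geq 1$; this index set is empty, so the entire last term vanishes for $f=1$. Second, the term $\frac{2n-1}{3}Q_g^{n-1,f-1}$ becomes $\frac{2n-1}{3}Q_g^{n-1,0}$, and here the key observation is that $Q_g^{n,0}=0$ for every $g$ and $n$: any map has at least one face, since the complement in the surface of the (one-dimensional) embedded graph is a nonempty open set and hence contains at least one connected component, each of which is by definition a face. Thus $Q_g^{n-1,0}=0$ and this term disappears as well.

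What remains after these two simplifications is exactly
\begin{equation*}
\frac{n+1}{6}\,Q_g^{n,1}=\frac{2n-1}{3}\,Q_g^{n-1,1}+\frac{(2n-3)(2n-2)(2n-1)}{12}\,Q_{g-1}^{n-2,1},
\end{equation*}
which is the asserted Harer-Zagier recurrence for $\epsilon_g(n)=Q_g^{n,1}$. It then remains only to match the initial conditions: setting $f=1$ in $Q_g^{0,f}={\bf 1}_{\{(g,f)=(0,1)\}}$ gives $\epsilon_g(0)={\bf 1}_{\{g=0\}}$, and the vanishing of $Q_g^{n,f}$ for negative $n$ or $g$ gives $\epsilon_g(n)=0$ when $n<0$ or $g<0$. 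Since every step is a direct substitution, I do not expect any real obstacle; the only point requiring a word of justification is the vanishing $Q_g^{n,0}=0$, which is precisely where the ``one-face'' structure of the Harer-Zagier setting enters and forces the recurrence to become linear (the convolution term dropping out entirely).
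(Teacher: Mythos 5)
Your proof is correct and follows exactly the paper's (implicit) route: the paper obtains Corollary~\ref{cor:HZ} precisely by setting $f=1$ in Corollary~\ref{cor:three}, observing that the convolution term has an empty index set and that $Q_g^{n-1,0}=0$, which is what you do. No issues.
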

\noindent We conclude this list of corollaries with yet another formulation of Corollary~\ref{cor:three} that takes a nice symmetric form and emphasizes the duality between vertices and faces inherent to maps.
Let $M_g^{i,j}$ be the number of rooted maps of genus $g$ with $i$ vertices and $j$ faces. Euler's relation ensures that such a map has $n$ edges where:
$$
i+j = n+2-2g,
$$
which shows that $M_{g}^{i,j} = Q_{g}^{i+j+2g-2,j}$. Corollary~\ref{cor:three} thus takes the following form:
\begin{theorem}\label{thm:three2}
The number $M_g^{i,j}$ of  rooted maps of genus $g$ with $i$ vertices and $j$ faces
(which is also the number of rooted bipartite quadrangulations of genus $g$ with $i$ black vertices and $j$ white vertices) satisfies the
following recurrence relation:
\begin{multline*}
\hspace{5mm}
\frac{n+1}{6} M_g^{i,j}  = 
\frac{(2n-1)}{3}   \Big( M_g^{ i-1,j }
+
 M_g^{ i,j-1 }
 + \frac{ ( 2n-3 )  ( 2n-2 ) }{4}M_{g-1}^{ i,j }  
 \Big)\\
+
\frac{1}{2}\sum _{i_1+i_2=i\atop i_1,i_2\geq 1}  
\sum_{j_1+j_2=j, \atop j_1,j_2 \geq 1}\sum _{ g_1+g_2=g\atop g_1,g_2\geq0}
 ( 2n_1-1 )  ( 2n_2-1 )   M_{g_1}^{i_1,j_1}  M_{g_2}^{ i_2,j_2} , \hspace{5mm} 
\end{multline*}
for $i,j\geq 1$, 
with the initial conditions that $M_g^{i,j}=0$ if $i+j+2g<2$, that if $i+j+2g=2$ then 
$M_g^{i,j}={\bf 1}_{\{(i,j)=(1,1)\}}$, and where we use the notation $n=i+j+2g-2$, $n_1=i_1+j_1+2g_1-1$, and  $n_2=i_2+j_2+2g_2-1$.
\end{theorem}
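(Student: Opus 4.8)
The plan is to deduce Theorem~\ref{thm:three2} entirely from Corollary~\ref{cor:three} by a change of summation variables governed by Euler's relation, with no further combinatorics required. The bridge is the identity $M_g^{i,j}=Q_g^{n,j}$ with $n=i+j+2g-2$ recorded just above the statement: a rooted map of genus $g$ with $i$ vertices and $j$ faces has exactly $n=i+j+2g-2$ edges, and its face count is left untouched, so the face-weight $f$ of the corollary is simply $f=j$. Thus the entire proof consists of rewriting each term of the recurrence of Corollary~\ref{cor:three}, expressed in the variables $(g,n,f)$, in the variables $(g,i,j)$.

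First I would treat the three linear terms. The left-hand side $\tfrac{n+1}{6}Q_g^{n,f}$ becomes $\tfrac{n+1}{6}M_g^{i,j}$ verbatim once we agree to abbreviate $n=i+j+2g-2$. For the term $\tfrac{2n-1}{3}Q_g^{n-1,f}$, lowering the edge count by one while keeping $g$ and $f=j$ fixed lowers the vertex count by one via Euler's relation, so $Q_g^{n-1,f}=M_g^{i-1,j}$; symmetrically $Q_g^{n-1,f-1}=M_g^{i,j-1}$, since dropping both edges and faces by one holds the vertex count fixed. For the genus-reducing term, $Q_{g-1}^{n-2,f}$ has genus $g-1$, $n-2$ edges and $j$ faces, and Euler's relation again returns vertex count $i$, so $Q_{g-1}^{n-2,f}=M_{g-1}^{i,j}$. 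Collecting the common factor $\tfrac{2n-1}{3}$ and using $\tfrac{2n-1}{3}\cdot\tfrac{(2n-3)(2n-2)}{4}=\tfrac{(2n-3)(2n-2)(2n-1)}{12}$ reproduces the bracketed expression of the theorem.

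Next I would handle the quadratic convolution. Under $f=j$ and the genus split $g_1+g_2=g$, the factor $Q_{g_1}^{k-1,u}$ becomes $M_{g_1}^{i_1,j_1}$ with $j_1=u$, while Euler's relation for this factor reads $k-1=i_1+j_1+2g_1-2$, i.e. $k=i_1+j_1+2g_1-1=:n_1$; likewise $\ell=i_2+j_2+2g_2-1=:n_2$. The face split $u+v=f$ becomes $j_1+j_2=j$. The crucial point is the vertex split: adding the two identities for $n_1,n_2$ gives $k+\ell=(i_1+i_2)+j+2g-2$, which the constraint $k+\ell=n=i+j+2g-2$ forces to equal $i$, so that $i_1+i_2=i$ emerges automatically. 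Since $2k-1=2n_1-1$ and $2\ell-1=2n_2-1$, the convolution takes precisely the claimed symmetric form.

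The only real care, and the closest thing to an obstacle, is the bookkeeping of summation ranges and degenerate conventions. I would check that the constraints $u,v\geq 1$ transcribe to $j_1,j_2\geq 1$, and that requiring the factors $M_{g_1}^{i_1,j_1}$ and $M_{g_2}^{i_2,j_2}$ to be nonzero (a nonempty map has at least one vertex) automatically forces $i_1,i_2\geq 1$, matching the ranges in the theorem; conversely $i_1,i_2\geq1$ together with $j_1,j_2\geq1$ and $g_1,g_2\geq0$ guarantees $k,\ell\geq1$, so no summands are gained or lost in passing between the two indexings. Finally I would verify the boundary data: $i+j+2g<2$ corresponds to $n<0$, whence $M_g^{i,j}=Q_g^{n,j}=0$, while $i+j+2g=2$ gives $n=0$ and $Q_g^{0,j}={\bf 1}_{\{(g,j)=(0,1)\}}$, which in the $(i,j)$ variables is exactly ${\bf 1}_{\{(i,j)=(1,1)\}}$ at $g=0$. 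This completes the translation, and hence the proof.
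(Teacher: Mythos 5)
Your proposal is correct and is precisely the paper's own route: the paper derives Theorem~\ref{thm:three2} from Corollary~\ref{cor:three} via the identity $M_g^{i,j}=Q_g^{i+j+2g-2,j}$ coming from Euler's relation, merely stating that the corollary ``thus takes the following form.'' You have simply carried out in full the change of variables, range bookkeeping, and boundary checks that the paper leaves implicit, and all of these check out.
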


The rest of the paper is organized as follows. In Section~\ref{sec:mainproof},
we prove Theorem~\ref{thm:mainx} (and therefore all the theorems and corollaries stated
above). This result relies on both classical facts about the
KP equation for bipartite maps, and an elementary Lemma obtained by
combinatorial means~(Lemma~\ref{lemma:only}).
In Section~\ref{sec:gencor}, we give corollaries of our results
in terms of generating functions. In particular, we
obtain a very efficient recurrence formula that can be used to compute the generating function
of maps of fixed genus inductively (Theorem
\ref{thm:recgf}).
 Finally, in Section~\ref{sec:comments}, we
comment on the differences between what we do here and other known approaches to the
problem: in brief, our method is much more powerful for the particular problem
treated here, but we still don't know whether it can be applied successfully to
cases other than bipartite quadrangulations.

\begin{figure}[h]
\begin{center}
\includegraphics[width=12cm]{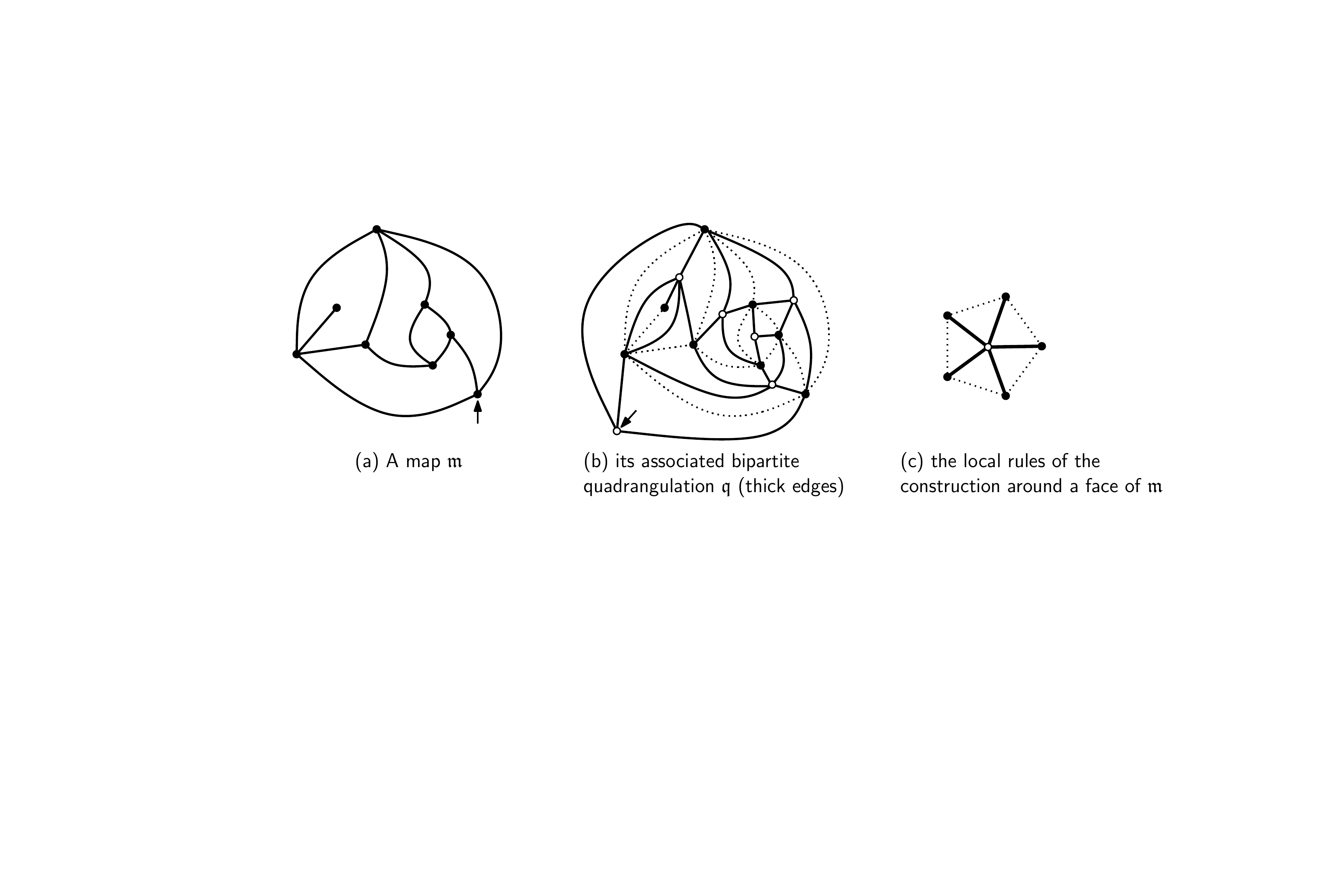}
\caption{{\bf Tutte's bijection.}
Given a (not necessarily bipartite) map $\mathfrak{m}$ of genus $g$ with $n$
edges, add a
new (white)
vertex inside each face of $\mathfrak{m}$, and link it by a new edge to each of the corners
incident to the face. The bipartite quadrangulation $\mathfrak{q}$
is obtained by
erasing all the original edges of $\mathfrak{m}$, \emph{i.e.} by keeping only
the new (white) vertices, the old (black) vertices, and the newly created
edges. The root edge of $\mathfrak{q}$ is the one created from the root corner
of $\mathfrak{m}$ (which is enough to root $\mathfrak{q}$ if we demand that its
root vertex is white).
(a) and (b) display an example of the construction for a map
of genus $0$ (embedded on the sphere). Root corners are indicated by arrows.
  }\label{fig:Tutte}
\end{center}
\end{figure}

\medskip \noindent{\bf Acknowledgements.} The first version of this paper dealt only
with the numbers $Q_g^n(1)$ without keeping track of the number of faces ({\it
i.e.} it contained Theorem~\ref{thm:main} but neither Theorem~\ref{thm:mainx}
nor its other corollaries). We are very grateful to \'Eric Fusy for asking to us whether 
we could control the number of faces as well, and to the organizers of the meeting \emph{Enumerative Combinatorics} 
in Oberwolfach (March 2014) where this question was asked.

\section{Proof of the main formula}
\label{sec:mainproof}

\subsection{Bipartite maps and KP equation}

The first element of our proof is the fact that the generating function for
bipartite maps is a solution to the KP equation (Proposition~\ref{prop:KP}
below). 
In the rest of the paper, the \emph{weight} of a map is one over its number of
edges, and a generating function of some family of maps is \emph{weighted} if
 each map 
is counted with its weight in this generating function. We let $z$, $w$, $x$ and
${\bf p}=p_1,p_2,\dots$ be infinitely many indeterminates. We extend the
variables in ${\bf p}$ multiplicatively to partitions, {\it i.e.} we denote
$p_\alpha:=\prod_i p_{\alpha_i}$ if $\alpha$ is a partition.
The keystone of this paper is the following result%
\footnote{the literature on
the KP hierarchy has been built over the years, with many references written by 
mathematical physicists and published in the physics literature. This is
especially true for the link with map enumeration, often arising in formal
expansions of matrix integrals. Thus it is not always
easy for the mathematician to know who to attribute the results in this field.
The reader may consult~\cite[Chapter 5]{LandoZvonkine} for historical references related to matrix
integrals in the physics literature, and \cite{GJ2008,
CarrellGoulden} for self-contained proofs written in the 
language of algebraic combinatorics.
As for Proposition~\ref{prop:KP}, it is essentially a consequence of the
classical fact
that map generating functions can be written in terms of Schur
functions (see e.g. ~\cite{JV1990a}), together with a result of Orlov and
Shcherbin~\cite{Orlov} that implies that certain infinite linear combinations of
Schur functions satisfy the KP hierarchy. To be self-contained here, we have
chosen to give the most easily checkable reference, and we prove Proposition~\ref{prop:KP} 
by giving all the details necessary to make the link with an equivalent statement in~\cite{GJ2008}.
}.
\begin{proposition}[\cite{GJ2008}, see also \cite{Orlov}
]\label{prop:KP}
For $n, v, k \geq 1$, and $\alpha \vdash n$ a partition of $n$, let $H_\alpha(n, v, k)$ be
the number of rooted bipartite maps with $n$ edges and $v$ vertices, $k$ of which are
white, where the half face degrees are given by the parts of $\alpha$.
Let $H=H(z,w,x;{\bf p})$ be
the weighted generating function of bipartite maps, with $z$ marking edges, $w$
marking vertices, $x$ marking white vertices and the $p_i$ marking the number of faces of degree $2i$ for
$i\geq 1$:
$$
H(z,w,x;{\bf p}):= 1+ \sum_{{n\geq 1\atop v\geq 1 }\atop k \geq 1} \frac{w^v z^n x^k}{n} \sum_{\alpha \vdash n}
H_\alpha(n,v,k) p_\alpha.
$$
Then $H$ is a solution of the KP equation:
\begin{eqnarray}\label{eq:kp}
-H_{3,1}+H_{2,2}+\frac{1}{12}H_{1^4} +\frac{1}{2}(H_{1,1})^2 =0,
\end{eqnarray}
where indices indicate partial derivatives with respect to the variables $p_i$,
for example $H_{3,1}:=\frac{\partial^2}{\partial p_3 \partial p_1} H$.
\end{proposition}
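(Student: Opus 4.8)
The plan is to prove that the generating function $H(z,w,x;{\bf p})$ for bipartite maps satisfies the KP equation \eqref{eq:kp}. Since the footnote tells us this is "essentially a consequence of the classical fact that map generating functions can be written in terms of Schur functions," the natural route is through the \emph{character theory / symmetric function} approach rather than a direct combinatorial manipulation. First I would recall the standard encoding of bipartite maps as pairs of permutations (or equivalently as factorizations in the symmetric group), which via the Frobenius formula expresses the generating function $H$ as an explicit sum over partitions $\lambda$ of products of Schur functions $s_\lambda$, weighted by central characters. The key structural fact I would invoke is that, after the change of variables in which the $p_i$ play the role of power-sum symmetric functions, $H$ becomes (up to the exponential/logarithmic bookkeeping needed to pass between connected and disconnected objects) a \emph{tau-function} of the KP hierarchy of the specific Orlov--Shcherbin "hypergeometric" type.

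The heart of the argument is then the theorem of Orlov and Shcherbin cited in the excerpt: any infinite linear combination of Schur functions of the form $\sum_\lambda c_\lambda(\text{content product}) \, s_\lambda({\bf p})$, where the coefficient attached to $\lambda$ is a product over the cells of $\lambda$ of a fixed function of the content, is automatically a solution of the KP hierarchy in the Miwa/power-sum variables ${\bf p}$. So the main steps I would carry out are: (i) write $H$ in the Schur basis and verify that its coefficients indeed have the required content-product form, which identifies the relevant "weights" of the vertices and faces with the extra variables $w,x,z$; (ii) invoke the Orlov--Shcherbin result to conclude that $H$ (or the appropriate exponential thereof) is a KP tau-function; and (iii) extract from the full KP hierarchy its \emph{first} nontrivial equation, which is precisely \eqref{eq:kp}, written out in terms of the partial derivatives $H_{3,1}, H_{2,2}, H_{1^4}, H_{1,1}$ with respect to $p_1,p_2,p_3$.

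Because the paper explicitly says it wants to be self-contained and give "the most easily checkable reference," an alternative and perhaps cleaner plan is to not reprove the Schur-function machinery from scratch but rather to \emph{match} $H$ against the precise statement already available in \cite{GJ2008}. In that case the proof reduces to a dictionary check: one identifies the generating function used there with our $H$ under the correct specialization of variables (edges $z$, vertices $w$, white vertices $x$, face-degree markers $p_i$), verifies that the normalizations and the connected-versus-disconnected conventions agree, and then simply transports the KP equation stated in \cite{GJ2008} to our setting. I would favour this route for brevity, devoting most of the written proof to pinning down the change of variables precisely.

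The hard part, in either approach, will be the bookkeeping that connects the combinatorial data of bipartite maps to the tau-function formalism: specifically, making sure the weight $1/n$ per edge, the markings $w^v$, $x^k$, and the passage between the (additive, connected) generating function $H$ and the (multiplicative) tau-function are handled consistently, since a KP tau-function typically counts \emph{possibly disconnected} structures whereas $H$ as defined counts connected maps. I expect the subtlety is exactly that $H$ is a logarithm (or a suitably normalized free energy) and that one must check the precise form in which the KP equation \eqref{eq:kp} survives this normalization — which is why the equation carries the particular constant $\tfrac{1}{12}$ and the quadratic term $\tfrac12 (H_{1,1})^2$ rather than appearing in its bare bilinear Hirota form.
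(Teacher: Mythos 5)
Your second, favoured route is exactly what the paper does: it encodes a rooted bipartite map with $n$ labelled edges as a transitive triple of permutations (a $1$-to-$(n-1)!$ correspondence), matches $H$ against the transitive permutation-factorization series $B$ of \cite[Theorem 3.1]{GJ2008} via the identity $(n-1)!\,H_\alpha(n,v,k)=b_{\alpha,1^n}^{(n-k,\,n+k-v,\,0,\dots)}$, and pins down the dictionary by the specialization $q_1=w^2zx$, $q_i=0$ for $i\geq 2$, $u_1=w^{-1}x^{-1}$, $u_2=w^{-1}$, $u_i=0$ for $i\geq 3$. Your anticipated subtleties (the $1/n$ weight and the connected-versus-disconnected normalization) are precisely the ones resolved by this matching, since the cited theorem is already stated for transitive tuples, so the proposal is correct and essentially identical in approach.
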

Actually, the generating
function $H$ is a solution of an infinite system of partial differential equations,
known as the KP Hierarchy (see, e.g., \cite{MJD,GJ2008, CarrellGoulden}), but we will need only the simplest one of these
equations here, namely~\eqref{eq:kp}.

\begin{proof}
First recall that a bipartite map $\mathfrak{m}$ with $n$ edges labelled
from $1$ to $n$ can be encoded by a triple of permutations
$(\sigma_\circ,\sigma_\bullet, \phi) \in (S_n)^3$ such that
$\sigma_\circ\sigma_\bullet = \phi$. In this correspondence, the cycles of
the permutation $\sigma_\circ$ (resp. $\sigma_\bullet$) encode the
counterclockwise ordering of the edges around the white (resp. black)
vertices of $\mathfrak{m}$, while the cycles of $\phi$ encode the clockwise
ordering of the white to black edge-sides around the faces of $\mathfrak{m}$.
This encoding gives a $1$ to $(n-1)!$ correspondence between rooted bipartite maps
with $n$ edges and triples of permutations as above that are \emph{transitive},
i.e. that generate a transitive subgroup of $S_n$. We refer to~\cite{CM}, or
Figure~\ref{fig:encoding} 
for more about this encoding (see also
\cite{LandoZvonkine, JV1990a}).

Now recall Theorem 3.1 in \cite{GJ2008}. Let
$b_{\alpha, \beta}^{(a_1, a_2, \cdots)}$ be the number of tuples of
permutations $(\sigma, \gamma, \pi_1, \pi_2, \cdots)$ on $\{1, \cdots, n\}$
such that
\begin{enumerate}[itemsep=-1pt, topsep=1pt]
  \item $\sigma$ has cycle type $\alpha$, $\gamma$ has cycle type $\beta$
        and $\pi_i$ has $n - a_i$ cycles for each $i \geq 1$;
  \item $\sigma \gamma \pi_1 \pi_2 \cdots = 1$ in $S_n$ where 1 is the identity;
  \item the subgroup generated by $\sigma, \gamma, \pi_1, \pi_2, \cdots$
        acts transitively on $\{1, \cdots, n\}$.
\end{enumerate}
Then the series
$$
  B=\sum_{\substack{|\alpha| = |\beta| = n \geq 1, \\ a_1, a_2, \cdots \geq 0}}
           \frac{1}{n!} b_{\alpha, \beta}^{(a_1, a_2, \cdots)}
               p_\alpha q_\beta u_1^{a_1} u_2^{a_2} \cdots
$$
is a solution to the KP hierarchy in the variables $p_1, p_2, \cdots$.
Here $q_1,q_2,\dots$ and $u_1,u_2,\dots$ are two infinite sets of auxiliary variables, and we use the notation $q_\beta=\prod_i q_{\beta_i}$. 

Now, using the encoding of maps as triples of permutations described above,
we see that
$(n-1)!H_\alpha(n,v,k)=b_{\alpha, 1^n}^{(n-k, n+k-v, 0, \cdots)}$, since the coefficient on the right hand side is the number of solutions to
the equation $\sigma \gamma \pi_1 \pi_2 = 1$ where the total number of cycles
in $\pi_1$ and $\pi_2$ is $v$, $\pi_1$ has $k$ cycles, $\sigma$ has cycle type $\alpha$ and where
$\gamma$ is the identity. Multiplying by $\sigma^{-1}$ then gives
$\pi_1 \pi_2 = \sigma^{-1}$ which matches the encoding of bipartite maps
given above. Thus, by setting $q_1 = w^2zx$, $q_i = 0$ for
$i \geq 2$, $u_1 = w^{-1} x^{-1}, u_2 = w^{-1}$ and $u_i = 0$ for $i \geq 3$ in $B$,
we get the series $H$ as required.

Note that we choose to attribute this result to \cite{GJ2008} since this provides a clear and checkable
mathematical reference. The result
was referred to before this reference in the mathematical physics literature, however, it is hard
to find references in which the result is properly stated or proved. We refer
to Chapter 5 of the book~\cite{LandoZvonkine} as an entry point for the interested reader.

\end{proof}
\begin{figure}[h]
\begin{center}
\includegraphics{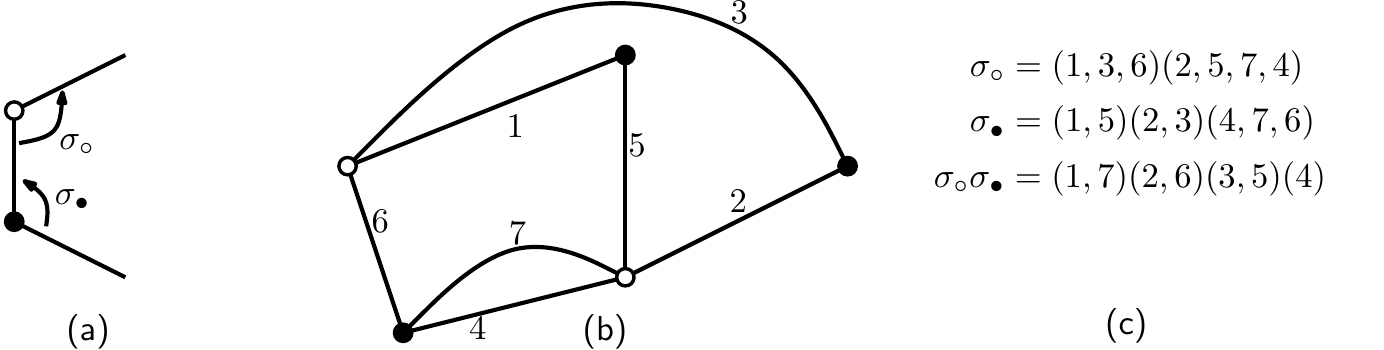}
\caption{(a) The rules defining the permutations $\sigma_\circ$ and $\sigma_\bullet$. (b) A bipartite map with $7$ edges arbitrarily labelled from $1$ to $7$. (c) The corresponding permutations $\sigma_\circ$ and $\sigma_\bullet$.}\label{fig:encoding}
\end{center}
\end{figure}

\subsection{Bipartite quadrangulations}

Our goal is to use Proposition~\ref{prop:KP} to get information on the
generating function of bipartite
quadrangulations. To this end, we let $\theta$ denote the operator 
that substitutes the variable~$p_2$ to~$1$
and all the variables~$p_i$ to $0$ for $i\neq 2$. 
When we apply $\theta$ to~\eqref{eq:kp} we get four terms:
\begin{eqnarray}\label{eq:thetakp}
-\theta H_{3,1}+\theta H_{2,2}+\frac{1}{12} \theta H_{1^4} + \frac{1}{2}(\theta H_{1,1})^2 =0.
\end{eqnarray}
Note that since all the derivatives appearing in $\eqref{eq:kp}$ are with respect
to $p_1, p_2$ or $p_3$, any monomial in $H$ that contains a variable $p_i$ for
some $i\neq \{1,2,3\}$ gives a zero contribution to \eqref{eq:thetakp}.
Therefore each of the four terms appearing in \eqref{eq:thetakp} can be
interpreted as the generating function of {\it some} 
family of bipartite maps having only faces of degree $2$,$4$, or $6$ (subject to
further restrictions). 
However, thanks to local operations on maps, we will be able to relate each term
to maps having only faces
of degree~$4$, as shown by the next lemma. 

If $A(z,w)$ is a formal power series in $z$ and $w$ with coefficients in $\mathbb{C}[x]$ we denote
by $[z^p w^q] A(z,w)$ the coefficient of the monomial $z^p w^q$ in $A(z,w)$. It is a polynomial in $x$.

\begin{lemma}\label{lemma:only}
Let $n, g \geq 1$. Then we have:
\begin{align} 
[z^{2n} w^{n+2-2g}] \theta H_{2,2} &=  \frac{n-1}{2} Q_g^n(x),  \label{eq:22}\\
[z^{2n} w^{n+1-2g}] \theta H_{1,1} &=   (2n-1) Q_g^{n-1}(x),   \label{eq:11}\\
[z^{2n} w^{n+2-2g}] \theta H_{1^4} &=   (2n-1)(2n-2)(2n-3)Q_{g-1}^{n-2}(x),
\label{eq:1111}\\
[z^{2n} w^{n+2-2g}] \theta H_{3,1} &=   
\frac{2n-1}{3} \Big( Q_g^n(x) - (1+x) Q_g^{n-1}(x)\Big).  \label{eq:31}
\end{align}
\end{lemma}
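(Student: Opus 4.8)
The plan is to interpret each of the four derivatives $\theta H_{a,b}$ combinatorially, using that $H$ is the weighted generating function of bipartite maps (weight $1/n$ per edge) with $z$ marking edges, $w$ marking all vertices, $x$ marking white vertices, and $p_i$ marking faces of degree $2i$. Differentiating $H$ with respect to $p_i$ picks out one distinguished face of degree $2i$ and removes the factor $p_i$; applying $\theta$ then forces all remaining faces to have degree $4$. So $\theta H_{2,2}$ enumerates bipartite maps with two distinguished (ordered) faces of degree $4$ and all other faces of degree $4$ — i.e.\ genuine bipartite quadrangulations with two marked faces — while $\theta H_{3,1}$ enumerates maps with one distinguished face of degree $6$, one of degree $2$, and the rest of degree $4$, and so on. The first task is to pin down, in each case, exactly which bipartite maps are being counted, keeping careful track of the edge-weight $1/n$, the vertex count (hence the power of $w$), the white-vertex count (power of $x$), and the edge count (power of $z$), and then to relate these to honest quadrangulations $Q_g^n(x)$.

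First I would treat \eqref{eq:22}, which should be the cleanest. A bipartite quadrangulation with $n$ faces has $n$ edges (each degree-$4$ face contributes $4$ edge-sides, and $4n = 2\cdot(\text{edges})$ gives $2n$ edges\,\ldots\ let me be careful: $2n$ edges, so $z$ appears as $z^{2n}$, consistent with the stated coefficient extraction). A quadrangulation of genus $g$ with $n$ faces has, by Euler's relation, $v = n+2-2g$ vertices, matching the exponent $w^{n+2-2g}$. The operator $\theta H_{2,2}$ selects an ordered pair of distinct degree-$4$ faces out of the $n$ faces of such a quadrangulation; the weight $1/n$ (here $n$ is the number of edges, $=2n$ in the quadrangulation's own edge count, so the weight is $\frac{1}{2n}$) combined with the $n(n-1)$ ordered pairs of faces should produce the factor $\frac{n-1}{2}$ after accounting for the differentiation convention. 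I would verify the bookkeeping of the edge-weight normalization against the two derivatives (each $\partial/\partial p_2$ turning a $p_2^k$ into $k\,p_2^{k-1}$, so two derivatives give $k(k-1)$ ordered pairs), and confirm the factor lands on $\frac{n-1}{2}Q_g^n(x)$.

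The substantive work is in \eqref{eq:11}, \eqref{eq:1111}, and especially \eqref{eq:31}, where the distinguished faces have degree $\neq 4$ and one must apply the local surgery alluded to in the text. For \eqref{eq:11}, a single distinguished degree-$2$ face (a digon) can be contracted: a bipartite map with one marked digon and all other faces quadrangular should be in bijection with a quadrangulation having one fewer face, explaining the shift to $Q_g^{n-1}(x)$ and the factor $(2n-1)$ from the $2n-1$ edge-sides at which the contraction data can be recorded. For \eqref{eq:1111}, the four $p_1$-derivatives mark four ordered digons, and contracting/pairing them should relate the object to $Q_{g-1}^{n-2}(x)$ with the polynomial factor $(2n-1)(2n-2)(2n-3)$; the genus drops by one, which I would expect to emerge from the way the four digons, once opened, reconnect the surface. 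The hardest step will be \eqref{eq:31}: the combination of a degree-$6$ face and a degree-$2$ face does not correspond to a single clean surgery, and the right-hand side is itself a combination $Q_g^n(x)-(1+x)Q_g^{n-1}(x)$ rather than a single term. I anticipate that one must open the hexagonal face into quadrangular pieces in several ways and cancel contributions, with the $(1+x)$ factor recording a choice between adding a black or a white vertex (hence the asymmetric $x$-weighting). Establishing \eqref{eq:31} — identifying the correct local move, proving it is a bijection or a weight-preserving correspondence, and showing the surplus terms assemble into exactly $(2n-1)/3\cdot(1+x)Q_g^{n-1}(x)$ — is where I would concentrate the real effort; the other three are warm-ups by comparison.
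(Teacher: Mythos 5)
Your overall plan --- interpret each $\theta H_\lambda$ as a weighted count of bipartite maps with marked faces of prescribed degrees and all other faces quadrangular, then relate these to $Q_g^n(x)$ by local surgeries --- is exactly the paper's strategy, and your treatment of \eqref{eq:22} is essentially complete and correct. But two of the four cases contain genuine gaps.

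First, in \eqref{eq:1111} you expect the genus drop to ``emerge from the way the four digons, once opened, reconnect the surface.'' This is the wrong mechanism and would send you looking for a genus-changing surgery that does not exist. The digon contractions are local and genus-preserving. The genus $g-1$ is forced purely by Euler's formula before any surgery is performed: a map contributing to $[z^{2n}w^{n+2-2g}]\theta H_{1^4}$ has $2n$ edges, $n+2-2g$ vertices, and $(n-2)+4=n+2$ faces, hence genus $g-1$ already. (The same bookkeeping, via the identity $2g = n+2-v+|\lambda|/2-\ell$, is what fixes the genus in every case.) Relatedly, note that $H_{1,1}$ is a \emph{second} derivative in $p_1$, so \eqref{eq:11} involves two marked digons, not one; the clean statement to prove is $P_h^{m,\ell}(x)=m(m-1)\cdots(m-\ell+1)Q_h^{(m-\ell)/2}(x)$ for the count of maps with $\ell$ numbered digons and $m$ edges, obtained by induction on $\ell$: contracting one digon into a marked (root) edge is an $m$-to-$1$ correspondence, since expanding the root edge back into a digon leaves $m$ choices of root corner. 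Your ``$2n-1$ edge-sides at which the contraction data can be recorded'' does not produce the right factor; the $2n-1$ in \eqref{eq:11} is $\frac{1}{2n}\cdot 2n(2n-1)$.

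Second, for \eqref{eq:31} you correctly identify it as the hard case and correctly guess the two ingredients (cutting the hexagon by diagonals, and a $(1+x)$ recording a black/white choice), but you do not supply the actual argument, which is the one genuinely new idea of the lemma. After contracting the marked digon one is left with $X_g^n(x)$, the generating polynomial of rooted bipartite maps with one hexagonal face, all others quadrangular, and $2n-1$ edges; what is then needed is the Tutte equation
\begin{equation*}
Q_g^n(x)=\frac{3}{2n-1}X_g^{n}(x)+(1+x)\,Q_g^{n-1}(x),
\end{equation*}
proved by deleting the root edge of a quadrangulation: either it separates two distinct faces, which merge into a hexagon (three diagonals recreate the two quadrangles, and $2n-1$ white corners serve as roots), or --- since all face degrees are even --- it is a pendant edge inside a digon, whose removal and contraction yields a quadrangulation with $n-1$ faces, the pendant vertex being white or black giving the $(1+x)$. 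Without this identity the right-hand side of \eqref{eq:31} cannot be assembled, so as it stands the proposal establishes \eqref{eq:22}, sketches \eqref{eq:11} and \eqref{eq:1111} with the errors noted above, and leaves \eqref{eq:31} as an announced intention.
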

We now prove the lemma.
By definition, if $v\geq 1$ and $\lambda=(\lambda_1,\lambda_2,\dots,\lambda_\ell)$ 
is a partition of some integer, then 
$[z^{2n}w^v]\theta H_\lambda$
is $\frac{1}{2n}$ times the generating polynomial (the variable $x$ marking white vertices) of rooted bipartite maps with 
$2n$ edges, $v$ vertices, $\ell$ marked
(numbered) faces of degrees $2\lambda_1, 2\lambda_2, \dots, 2\lambda_\ell$, and
all other (unmarked) faces of degree $4$. If $r$ is the number of unmarked
faces, such a map has $r+\ell$
faces, and by Euler's formula, the genus $g$ of this map satisfies:
$
v -2n + (r+\ell)=2-2g.
$
Moreover the number of edges is equal to the sum of the half face degrees so
$2n=2r+|\lambda|$, therefore we obtain the relation:
\begin{align}\label{eq:genusFromMonomial}
2g = n + 2 -v +\frac{|\lambda|}{2}-\ell,
\end{align}
which we shall use repeatedly. 
We now proceed with the proof of Lemma~\ref{lemma:only}.

\begin{proof}[Proof of \eqref{eq:22}]
As discussed above, $H_{2,2}$ is the weighted generating function of rooted bipartite maps with two
marked faces of degree $4$, so $\theta H_{2,2}$ is the weighted generating function of rooted
quadrangulations with two marked faces. Moreover,
by~\eqref{eq:genusFromMonomial}, the maps that contribute to the coefficient
$[z^{2n}w^{n+2-2g}]$ in $\theta H_{2,2}$ have genus $g$. Now, there are $n (n-1)$ ways of marking
two faces in a quadrangulation with $n$ faces, and the weight of such a map is
$\frac{1}{2n}$ since it  has $2n$ edges.
 Therefore:
$
[z^{2n} w^{n+2-2g}] \theta H_{2,2} =  \frac{1}{2n} n\cdot(n-1) Q_g^n(x). 
$
\end{proof}

\begin{proof}[Proof of \eqref{eq:11} and~\eqref{eq:1111}]
As discussed above, for $k\geq 1$, 
$\theta H_{1^{2k}}$ is the weighted generating function 
of bipartite maps carrying $2k$  marked (numbered) faces of degree
$2$, having all other faces of degree $4$.
Moreover, by~\eqref{eq:genusFromMonomial}, the genus of maps that contribute to
the coefficient $[z^{2n}w^{n+k-2g}]$ in this series is equal to $g+1-k$.
Therefore:
\begin{eqnarray}\label{eq:inter1}
[z^{2n} w^{n+k-2g}] \theta H_{1^{2k}} =  \frac{1}{2n} P_{g+1-k}^{2n,2k}(x)
\end{eqnarray}
where $P_h^{m,\ell}(x)$ denotes the generating polynomial (the variable $x$ marking white vertices) of rooted bipartite maps of genus $h$ with
$\ell$ numbered marked faces of degree $2$, all other faces of degree $4$, and
$m$ edges in total. Now, we claim that for all $h$ and all $m, \ell$ with $m+\ell$ even one has:
\begin{eqnarray}\label{eq:inter2}
P_h^{m,\ell}(x) = m (m-1) \dots (m-\ell+1) Q_h^{\frac{m-\ell}{2}}(x).
\end{eqnarray}
This is obvious for $\ell =0$ since a quadrangulation with $m$ edges has $m/2$
faces. For $\ell \geq 1$, consider a bipartite map with all faces
of degree $4$, except $\ell$ marked faces of degree $2$, and $m$ edges in total.
By contracting the first marked face into an edge, one obtains a map with one
less marked face, and a marked edge. This marked edge can be considered as the
root edge of that map (keeping the canonical bicolouration of vertices). Conversely, starting with a map having $\ell-1$ marked
faces, and $m-1$ edges, and expanding the root-edge into a face of degree $2$, 
there are $m$ ways of choosing a root corner in the resulting map in a way that
preserves the canonical bicolouration of vertices.
Since the contraction operation does not change the number of white vertices,  we deduce that $P_h^{m,\ell}(x)=m \cdot P_h^{m-1,\ell-1}(x)$ and~\eqref{eq:inter2} follows
by induction. \eqref{eq:11} and~\eqref{eq:1111} then follow
from~\eqref{eq:inter1} for $k=1$ and $k=2$, respectively.
\end{proof}

\begin{proof}[Proof of \eqref{eq:31}]
This case starts in the same way as the three others, but we will
have to use an additional tool (a simple Tutte equation) in order to express
everything in terms of quadrangulation numbers only.
First, $\theta H_{3,1}$ is
the weighted generating function of rooted bipartite maps
with one face of degree $6$, one face of degree $2$, and all other faces of degree
$4$.
Moreover, by~\eqref{eq:genusFromMonomial}, maps that contribute to the
coefficient of $[z^{2n}w^{n+2-2g}]$ in this series all have genus $g$.
We first get rid of the face of degree~$2$ by contracting it into an edge, and
declare this edge as the root of the new map, keeping the canonical
bicolouration. If
the original map has $2n$ edges, we obtain a map with $2n-1$ edges in total.
Conversely, if we start with a map with $2n-1$ edges and we expand the root
edge into a face of degree $2$, we have $2n$ ways of choosing a new root corner in
the newly created map, keeping the canonical bicolouration. Therefore 
if we let $X_g^n(x)$ be the generating polynomial (the variable $x$ marking white vertices) 
of rooted bipartite maps having a face
of degree $6$, all other faces of degree $4$, and $2n-1$ edges in total, we  have:
$$
[z^{2n} w^{n+2-2g}] \theta H_{3,1} = \frac{1}{2n} \cdot
2n X_g^n(x) = X_g^n(x),
$$
where the first factor is the weight coming from the definition of $H$.
Thus to prove~\eqref{eq:31} it is enough to establish the following equation:
\begin{eqnarray}\label{eq:tutteHexa}
Q_g^n(x)=
 \frac{3}{2n-1}X_g^{n}(x) + (1+x)Q_g^{n-1}(x).
\end{eqnarray}
The reader well acquainted with map enumeration may have recognized
in~\eqref{eq:tutteHexa} a (very simple
case of a) Tutte/loop equation. It is proved as follows.
Let $\mathfrak{q}$ be a rooted bipartite quadrangulation of genus $g$ with $n$
faces, and let $e$ be the root edge of $\mathfrak{q}$.
There are two cases: 1. the edge $e$ is bordered by two distinct faces, and 2.
the edge $e$ is bordered twice by the same face.

In case 1., removing the edge $e$ gives rise to a map
of genus $g$ with a marked face of degree $6$. By marking one of the $2n-1$
white corners of this map as the root, we obtain a rooted map counted by
$X_g^{n}(x)$, and since there are $3$ ways of placing a diagonal in a face of degree $6$
to create two quadrangles, the generating polynomial $N_1(x)$ corresponding to case 1.
satisfies $(2n-1)N_1(x) = 3 X_g^{n}(x)$. 

In case 2., the removal of the edge $e$
creates two faces (\emph{a priori}, either in the same or in two different connected
components) of degrees $k_1,k_2$ with $k_1+k_2+2=4$. Now since $\mathfrak{q}$ is bipartite, 
$k_1$ and $k_2$ are even which shows that one of the $k_i$ is zero and the
other is equal to $2$.
Therefore, in $\mathfrak{q}$, $e$ is a single edge hanging in a
face of degree~$2$. By removing $e$ and contracting
the degree $2$ face, we obtain a
quadrangulation with $n-1$ faces (and a marked edge that serves as a root,
keeping the canonical bicolouration). 
Conversely,  there are two ways to attach a hanging  edge
in a face of degree~$2$, which respectively keep the number of white vertices equal or increase it by one.
Therefore the generating polynomial corresponding to case 2. is $N_2(x)=(1+x)Q_g^{n-1}(x)$.

Writing that $Q_g^n(x)=N_1(x)+N_2(x)$, we obtain~\eqref{eq:tutteHexa} and complete the
proof.
\end{proof}

\begin{proof}[Proof of Theorem~\ref{thm:mainx}]
Just extract the coefficient of $[z^nw^{n+2-2g}]$ in Equation~\eqref{eq:thetakp} using
Lemma~\ref{lemma:only}, and group together the two terms containing $Q_g^n(x)$,
namely
$\frac{n-1}{2}Q_g^n(x) -\frac{2n-1}{3}Q_g^n(x) =  -\frac{n+1}{6}Q_g^n(x).$

\end{proof}

\section{Fixed genus generating functions}
\label{sec:gencor}

\subsection{The univariate generating functions $Q_g(t)$}
\label{sec:gencor1}
We start by studying the generating functions of maps of fixed genus by the number of edges.
In particular through the whole Section~\ref{sec:gencor1} we set $x=1$ and we use the notation $Q_g^n\equiv Q_g^n(1)$ as in Theorem~\ref{thm:main}.
We let $Q_g(t):=\sum_{n\geq 0} Q_g^n t^n$ be the generating function of rooted
maps of genus $g$ by the number of edges.  It was shown in~\cite{BC} that
$Q_g(t)$ is a rational function of $\rho:=\sqrt{1-12t}$.
In genus $0$, the result goes back to Tutte~\cite{Tutte:census} and one has the
explicit expression:
\begin{eqnarray}\label{eq:planar}
Q_0(t) = T - t T^3,
\end{eqnarray}
where $T=\frac{1-\sqrt{1-12t}}{6t}$ is the unique formal power series solution of the equation
\begin{eqnarray}\label{eq:T}
T=1+3tT^2.
\end{eqnarray}
In the following we will give a very simple recursive formula to compute the
series $Q_g(t)$ as a rational function of $T$, and we will study some of its
properties
\footnote{Note that being a rational function of $T$ or $\rho$ is equivalent,
but we prefer to work with $T$, since as a power series $T$ has a clear
combinatorial meaning. 
Indeed, $T$ is the generating function of labelled/blossomed trees, which are 
the fundamental building blocks
that underly the bijective decomposition of maps~\cite{Schaeffer,
ChassaingSchaeffer,CMS}. It is thus tempting to believe that those rationality
results have a combinatorial interpretation in terms of these
trees, even if it is still an open problem to find one. Indeed, so far the best rationality statement that is understood
combinatorially is that
the series of rooted bipartite quadrangulations of genus $g$ with a distinguished 
vertex is a rational function
in the variable $U$ such that $1=tT^2 (1+U+U^{-1})$, which is weaker than the
rationality in $T$. See~\cite{CMS} for this result.\label{footpage}}.

\begin{theorem}\label{thm:recgf}
For $g\geq 0$,  we have
$Q_g(t)=R_g(T)$ where $T$ is given by~\eqref{eq:T} and $R_g$ is a rational
function that can be computed iteratively
via:
\begin{align}\label{eq:diffrecT}
&\frac{d}{dT}\left(
\frac{(T-1)(T+2)}{3T} R_g (T)
\right)\\ &=
\frac{(T-1)^2}{18T^4}(2D+1)(2D+2)(2D+3)R_{g-1}(T)
+
 \frac{(T-1)^2}{3T^4} \sum_{i+j=g\atop i,j\geq 1}
\big((2 D+1) R_i(T)\big) 
\big((2 D+1) R_j(T)\big) \nonumber,
\end{align}
where $\displaystyle D=\frac{T(1-T)}{T-2}\frac{d}{dT}$.
\end{theorem}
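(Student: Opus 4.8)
The plan is to convert the edge-counting recurrence of Theorem~\ref{thm:main} (the $x=1$ case) into a differential equation for the generating functions $Q_g(t)=\sum_n Q_g^n t^n$, and then perform the change of variables $t\mapsto T$. First I would multiply the recurrence by $t^n$ and sum over $n\geq 1$. Each polynomial-in-$n$ factor of the form $(2n{-}a)$ must be recognized as the action of the Euler-type operator $t\frac{d}{dt}$ (or shifts thereof) on the relevant series: since $t\frac{d}{dt}t^n = n\,t^n$, a factor like $(2n-1)Q_g^{n-1}$ summed against $t^n$ becomes $t$ times a first-order differential operator applied to $Q_g(t)$, while the cubic factor $(2n-1)(2n-2)(2n-3)$ hitting $Q_{g-1}^{n-2}$ becomes a third-order operator times $t^2$. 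The convolution sum $\sum_{k+\ell=n}\sum_{i+j=g}(2k-1)(2\ell-1)Q_i^{k-1}Q_j^{\ell-1}$ is the key structural term: it is a Cauchy product, so it becomes the product of two series, each obtained by applying a first-order operator of the form $(2t\frac{d}{dt}+c)$ to $Q_i(t)$ and $Q_j(t)$ and multiplying by a power of $t$. The result of this first stage is a clean ODE for $Q_g(t)$ in the variable $t$, with the genus-$g$ term on the left, lower-genus and convolution data on the right.

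Second, I would carry out the substitution to $T$, using the defining relation $T=1+3tT^2$ from~\eqref{eq:T}. The point of this change is that $t$ and its derivatives have awkward rational expressions, whereas $Q_g$ is rational in $T$. I would differentiate $T=1+3tT^2$ implicitly to express $\frac{dT}{dt}$, hence $t\frac{d}{dt}$, as a first-order operator in $T$; a short computation should yield exactly the operator $D=\frac{T(1-T)}{T-2}\frac{d}{dT}$ (up to the normalizing constants that produce the $(2D+1)$, $(2D+2)$, $(2D+3)$ factors). The cubic operator then becomes $(2D+1)(2D+2)(2D+3)$ acting on $R_{g-1}$, and each factor in the convolution becomes $(2D+1)R_i$ and $(2D+1)R_j$, matching the right-hand side of~\eqref{eq:diffrecT}. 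The left-hand side $\frac{d}{dT}\!\big(\frac{(T-1)(T+2)}{3T}R_g\big)$ should emerge as the $T$-variable rewriting of whatever combination of $Q_g$ and $t\frac{d}{dt}Q_g$ appears on the genus-$g$ side of the $t$-ODE; I expect that the apparently mysterious prefactor $\frac{(T-1)(T+2)}{3T}$ arises precisely because the left-hand differential operator in $t$ is a total derivative, and pulling out an integrating factor expressed in $T$ collapses it into $\frac{d}{dT}$ of a single product.

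The main obstacle will be bookkeeping the constants and the powers of $t$ correctly through the substitution, and in particular verifying that the operator conjugation $t\frac{d}{dt}\mapsto D$ is exact with the right shifts so that $(2n{-}1)$-type factors become $(2D{+}1)$-type factors rather than something off by a constant. Conjugating a differential operator under a nonlinear change of variables does not commute naively with multiplication by functions of $T$, so I would be careful to track the order in which the operator $D$ and the rational prefactors $\frac{(T-1)^2}{18T^4}$ and $\frac{(T-1)^2}{3T^4}$ are applied. A secondary subtlety is the treatment of the boundary/initial terms when summing the recurrence: the convolution range $i,j\geq 1$ on the right-hand side of~\eqref{eq:diffrecT} (excluding $i=0$ or $j=0$) must be reconciled with the full convolution $i,j\geq 0$ in Theorem~\ref{thm:main}, which means the $i=0$ and $j=0$ pieces of the original sum must be extracted and shown to combine with the linear terms (the $Q_g^{n-1}$ contributions) — this accounting, together with the base series $Q_0=T-tT^3$ from~\eqref{eq:planar}, is where I expect the computation to be most delicate but ultimately mechanical.
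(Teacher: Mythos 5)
Your proposal follows essentially the same route as the paper: rewrite the recurrence as a differential equation in $t$ using the Euler operator, change variables via $T=1+3tT^2$ so that $t\frac{d}{dt}$ becomes $D=\frac{T(1-T)}{T-2}\frac{d}{dT}$, extract the $i=0$ and $j=0$ terms of the convolution and combine them with the linear terms using the explicit planar series $Q_0=T-tT^3$, and recognize the genus-$g$ side as the total derivative $\frac{d}{dT}\bigl(\frac{(T-1)(T+2)}{3T}R_g\bigr)$. The only point you omit is that the rationality of $R_g$ is not established by this computation but must be imported from Bender--Canfield, as the paper itself does.
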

\begin{proof}
First, one easily checks that Theorem~\ref{thm:main} is equivalent to the following differential
equation:
\begin{eqnarray}\label{eq:diff}
&&(D+1)Q_g = \\ &&4t (2D+1)Q_g +
\frac{1}{2}t^2(2D+1)(2D+2)(2D+3)Q_{g-1}+3t^2\sum_{i+j=g\atop i,j\geq 0}
\big((2D+1)Q_i\big)\big((2D+1)Q_j\big), \nonumber
\end{eqnarray}
where $D$ is the operator $D:=t\cdot\frac{d}{dt}$. Using~\eqref{eq:T} one
checks that $T'(t)=\frac{T(1-T)}{T-2}$, so that
$D=\left(\frac{dT(t)}{dt}\right) \frac{d}{dT}=\frac{T(1-T)}{T-2}\frac{d}{dT}$
and the definition of $D$ coincides with the one given in the statement of the
theorem.

Now, for $h\geq 0$ let $R_h$ be the unique formal power series such that
$Q_h(t)=R_h(T)$.
Grouping all the genus $g$ generating functions on the left hand side, 
we can put~\eqref{eq:diff} in the form:
\begin{eqnarray}\label{eq:diff1}
A R_g(t) + B \frac{d}{dT} R_g(t) = R.H.S.
\end{eqnarray}
where $A = 1-4t-6t^2 (2D+1)Q_0$, $B=t(1-8t-12t^2(2D+1)Q_0))$, and the R.H.S. is
the same as in~\eqref{eq:diffrecT}.
Using the explicit expression~\eqref{eq:planar} of $Q_0$ in terms of $T$, 
we can then rewrite the L.H.S. of~\eqref{eq:diff1} as
\begin{eqnarray*}
\frac{(T-1)(T+2)}{3T} R'_g(T) + \frac{T^2+2}{3T^2} R_g(T) = 
\frac{d}{dT} \left(  \frac{(T-1)(T+2)}{3T}R_g(T)
\right),
\end{eqnarray*}
and we obtain~\eqref{eq:diffrecT}. Note that we have not proved that $R_g(T)$
is a rational function: we admit this fact from~\cite{BC}.
\end{proof}

Observe that we have $R_g(1)=Q_g(0)<\infty$ so the quantity $\frac{(T-1)(T+2)}{3T}
R_g (T)$ vanishes at $T=1$, and we have:
$$\frac{(T-1)(T+2)}{3T} R_g (T) = \int_1^T R.H.S.,$$
with the R.H.S. given by~\eqref{eq:diffrecT},
which shows that~\eqref{eq:diffrecT} indeed enables one to compute the $R_g$'s
recursively.
Note that it is not obvious {\it a priori} that no logarithm appears during this
integration, although this is true since it is known that $R_g$ is
rational\footnote{We unfortunately haven't been able to reprove this fact from
our approach}~\cite{BC}.
Moreover, since all generating functions considered are finite at $T=1$ (which
corresponds to the point $t=0$) we obtain via an easy induction that $R_g$ has only poles at $T=2$
or $T=-2$. More precisely, by an easy induction, we obtain a bound on the
degrees of the poles:
\begin{corollary}
For $g\geq 1$ we have $Q_g(t)=R_g(T)$ where $R_g$ can be written as:
\begin{align}\label{eq:ansatz}
R_g =
c_0^{(g)}+ 
\sum_{i=1}^{5g-3} \frac{\alpha_i^{(g)}}{(2-T)^i}
+\sum_{i=1}^{3g-2} \frac{\beta_i^{(g)}}{(T+2)^i},
\end{align}
for rational numbers $c_0^{(g)}$ and $\alpha_i^{(g)},\beta_i^{(g)}$.
\end{corollary}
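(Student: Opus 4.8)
The plan is to prove the corollary by induction on $g$, extracting structural information from the recursive formula \eqref{eq:diffrecT} established in Theorem~\ref{thm:recgf}. The key observation is that the equation tells us how to obtain $R_g$ from the lower-genus data $R_0, R_1, \dots, R_{g-1}$ by integration, and that each such step can only increase the orders of the poles at $T=2$ and $T=-2$ in a controlled way. The base case $g=1$ would be checked by hand: one computes $R_1$ explicitly from \eqref{eq:diffrecT} using $R_0 = T - tT^3$ rewritten in terms of $T$ via \eqref{eq:T}, and verifies that its poles lie at $T=2$ with order at most $5\cdot 1 - 3 = 2$ and at $T=-2$ with order at most $3\cdot 1 - 2 = 1$.

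**First I would** set up a precise inductive hypothesis, namely that $R_g$ has the shape \eqref{eq:ansatz} — a constant plus a principal part at $T=2$ of order $\leq 5g-3$ and a principal part at $T=-2$ of order $\leq 3g-2$, with no other poles (in particular, no pole or logarithm at $T=0$, $T=1$, or $T=\infty$, consistent with $R_g$ being rational and finite at $T=1$). The heart of the argument is a book-keeping lemma about how the operator $D = \frac{T(1-T)}{T-2}\frac{d}{dT}$ and the prefactors $\frac{(T-1)^2}{18T^4}$ and $\frac{(T-1)^2}{3T^4}$ act on functions of this form. Applying $D$ to a pole of order $i$ at $T=2$ raises its order to at most $i+1$ because of the explicit factor $(T-2)^{-1}$ in $D$; applying $D$ at $T=-2$ keeps the order the same since $D$ is regular there. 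The product terms $(2D+1)R_i \cdot (2D+1)R_j$ with $i+j=g$ contribute poles of order at most $(5i-3+1)+(5j-3+1) = 5g-4$ at $T=2$, and at most $(3i-2)+(3j-2) = 3g-4$ at $T=-2$; the linear term $(2D+1)(2D+2)(2D+3)R_{g-1}$ contributes order at most $(5(g-1)-3)+3 = 5g-5$ at $T=2$ and $3(g-1)-2 = 3g-5$ at $T=-2$. One must also track the factor $T^{-4}$, which introduces a potential pole at $T=0$; the claim is that this spurious singularity cancels, since $Q_g(t)$ is analytic at $t=0$ (i.e.\ $T=1$), and more importantly no pole at $T=0$ survives because the combinatorial series is a power series in $t$ and $T$ is a unit at $t=0$.

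**The final step** is to invert the left-hand side of \eqref{eq:diffrecT}: having bounded the pole orders of the right-hand side $\mathrm{R.H.S.}$, I integrate using $\frac{(T-1)(T+2)}{3T}R_g(T) = \int_1^T \mathrm{R.H.S.}$, as already noted after the statement of Theorem~\ref{thm:recgf}. Integrating a pole of order $m\geq 2$ at $T=2$ yields a pole of order $m-1$ there, while integrating a simple pole would produce a logarithm — so the crucial input, which the authors explicitly borrow from~\cite{BC}, is that $R_g$ is genuinely rational, guaranteeing the coefficient of the simple pole in $\mathrm{R.H.S.}$ vanishes. Given rationality, integrating the order $\leq 5g-4$ pole at $T=2$ produces order $\leq 5g-5$ for $\frac{(T-1)(T+2)}{3T}R_g$, and dividing by the prefactor (which has a simple zero at $T=2$ hidden in $(T+2)$? — no, the simple factor near $T=2$ is regular and nonzero) leaves the order unchanged, but the factor $(T+2)$ in the numerator raises the order at $T=-2$ by one. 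Tracking these shifts carefully gives exactly the advertised bounds $5g-3$ and $3g-2$.

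\emph{The main obstacle} I anticipate is the precise accounting at $T=-2$: the prefactor $\frac{(T-1)(T+2)}{3T}$ has a simple \emph{zero} at $T=-2$, so dividing by it to solve for $R_g$ raises the pole order there by exactly one, and one must verify that this single extra unit, combined with the $3g-4$ coming from the product terms (versus $3g-5$ from the linear term), integrates and divides down to \emph{exactly} $3g-2$ rather than overshooting. Equally delicate is confirming that the $T^{-4}$ factors and the integration do not manufacture a pole at $T=0$ — this requires using the combinatorial meaning of $Q_g(t)$ as a power series in $t$ (equivalently, regularity at $T=1$) together with the rationality from~\cite{BC} to rule out spurious singularities, rather than deriving it purely from the recursion.
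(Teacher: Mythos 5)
Your overall strategy --- induction on $g$, a bookkeeping lemma for how the operator $D$ and the prefactors shift pole orders, integration from $T=1$, and borrowing rationality from~\cite{BC} to exclude logarithmic terms --- is exactly the paper's ``easy induction''. However, the bookkeeping lemma at the heart of your argument is computed incorrectly. Applying $D=\frac{T(1-T)}{T-2}\frac{d}{dT}$ to a pole of order $i$ at $T=2$ produces a pole of order $i+2$, not $i+1$: the differentiation $\frac{d}{dT}$ already raises the order from $i$ to $i+1$, and the explicit factor $(T-2)^{-1}$ adds one more. Likewise, at $T=-2$ the operator $D$ raises the pole order by $1$, not by $0$; your remark that the order is preserved ``since $D$ is regular there'' forgets that differentiation increases the order of every pole even where the rational prefactor of $D$ is regular and nonzero.

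With the correct accounting, the right-hand side of~\eqref{eq:diffrecT} has a pole at $T=2$ of order at most $(5(g-1)-3)+6=5g-2$ from the linear term and $(5i-3+2)+(5j-3+2)=5g-2$ from the product terms; integration drops this to $5g-3$, and dividing by $\frac{(T-1)(T+2)}{3T}$, which is regular and nonzero at $T=2$, leaves $5g-3$. At $T=-2$ both terms give order at most $3g-2$, integration drops this to $3g-3$, and dividing by the simple zero of $(T+2)$ raises it back to $3g-2$. Your figures ($5g-4$ and $3g-4$ for the right-hand side, hence $5g-5$ and $3g-4$ for $R_g$ after your own integration-and-division step) are not only inconsistent with your closing claim that one lands ``exactly'' on $5g-3$ and $3g-2$; as bounds they are actually false, since the pole of $R_g$ at $T=2$ has order exactly $5g-3$ (the top coefficient $\alpha_{5g-3}^{(g)}$ is nonzero --- it produces the positive constant $t_g$ in the asymptotics~\eqref{eq:asympt}). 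So the arithmetic must be redone, after which the induction closes as you outline. Your remaining worry about a spurious pole at $T=0$ coming from the $T^{-4}$ prefactors is legitimate (the paper glosses over it as well); note that regularity at $T=1$ alone does not dispose of it, and one really does need the input from~\cite{BC} on where the singularities of the rational function $R_g$ can lie.
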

Note that by plugging the ansatz~\eqref{eq:ansatz} into the
recursion~\eqref{eq:diffrecT}, we obtain a very efficient way of computing the
$R_g$ 's inductively. 

We conclude this section with (known) considerations on asymptotics.
From~\eqref{eq:ansatz}, 
it is easy to see that the dominant singularity of $Q_g(t)$ is unique, and is reached at
$t=\frac{1}{12}$, \emph{i.e.} when $T=2$. In particular the dominant term
in~\eqref{eq:ansatz} is $\frac{\alpha_{5g-3}^{(g)}}{(2-T)^{5g-3}}$.
Using the fact that $2-T=2\sqrt{1-12t}+O(1-12t)$ when $t$ tends
to~$\frac{1}{12}$, and using a standard transfer theorem for algebraic
functions~\cite{FO}, we obtain
that for fixed $g$, $n$ tending to infinity:
\begin{align}\label{eq:asympt}
Q_g^n \sim t_g n^{\frac{5(g-1)}{2}}12^n,
\end{align}
with $t_g = \frac{1}{2^{5g-3}\Gamma(\frac{5g-3}{2})}\alpha_{5g-3}^{(g)}$.
Moreover, by extracting the leading order coefficient in~\eqref{eq:diffrecT}
when $T\sim 2$, 
we see with a short computation that the sequence 
$\tau_g=(5g-3)\alpha_{5g-3}^{(g)}=2^{5g-2}\Gamma(\frac{5g-1}{2})t_g$ satisfies 
the following Painlevé-I type recursion
\begin{align}\label{eq:tgrec}
\tau_g = \frac{1}{3} (5g-4)(5g-6)\tau_{g-1}+\frac{1}{2} \sum_{h=1}^{g-1}
\tau_{h} \tau_{g-h},
\end{align}
which enables one to compute the $t_g$'s easily by induction starting from
$t_1=\frac{1}{24}$ ({\it i.e.} $\tau_1=\frac{1}{3}$).
These results are well known (for~\eqref{eq:asympt} see \cite{BC0}, or \cite{CMS, Chapuy:PTRF} for bijective interpretations;
for~\eqref{eq:tgrec} see \cite[p.201]{LandoZvonkine} for historical references, or~\cite{BGR} for a proof along the same lines as ours but starting from the Goulden and Jackson recurrence~\cite{GJ2008}).
So far, as far as we know, all the known proofs of~\eqref{eq:tgrec} rely on the use of integrable
hierarchies.

\subsection{Genus generating function at fixed $n$.} We now indicate a straightforward consequence of Theorem~\ref{thm:mainx} in terms of ''genus generating functions'', {\it i.e.} generating functions of maps where the number of edges is fixed and genus varies. Such generating functions have been considered in the combinatorial literature before (with a slightly different scaling), especially in the case of one-face maps, where they admit elegant combinatorial interpretations (see~\cite{OB}).
They also appear naturally in formal expansions of matrix integrals (see, e.g., \cite{LandoZvonkine} or \cite{HZ}).
 Let $s$ be a variable, and for $n\geq1$ let $H_n(x,s)$ be the generating polynomial of maps with $n$ edges by the number of faces (variable $x$), and the genus (variable $s$), {\it i.e.}:
$$
H_n(x,s)=\sum_{g \geq 0} Q_g^n(x) s^g,
$$
in the notation of Theorem~\ref{thm:mainx}. Then Theorem~\ref{thm:mainx} has the following equivalent formulation:
\begin{corollary}\label{cor;genusgen}
The genus generating function $H_n\equiv H_n(x,s)$ is solution of the following recurrence equation:
\begin{multline*}
\!\!
\!\!
\!\!
\frac{n+1}{6} H_n = \frac{(1+x)(2n-1)}{3} 
  H_{n-1} + \frac{s ( 2n-3 )  ( 2n-2 )  ( 2n-1 ) }{12}  H_{n-2} \\ 
+
\frac{1}{2}\sum _{k+\ell=n\atop k,\ell\geq 1}
 ( 2k-1 )  ( 2\ell-1 )   H_{k-1} H_{ \ell-1},\hspace{1cm} 
\end{multline*}
for $n\geq 1$, with the initial condition $H_0(x,s)=x$.
\end{corollary}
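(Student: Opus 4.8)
The plan is to obtain Corollary~\ref{cor;genusgen} directly from Theorem~\ref{thm:mainx} by forming the genus generating function: I would multiply the recurrence of Theorem~\ref{thm:mainx} by $s^g$ and sum over all $g\geq 0$. Before doing so, I would note that for a fixed number of edges $n$ only finitely many genera contribute, since by Euler's relation a map with $n$ edges satisfies $2g\leq n$; hence $H_n(x,s)$ is in fact a polynomial in $s$ and every sum manipulated below is finite, so no question of convergence arises and all rearrangements are legitimate.

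Working term by term, the left-hand side and the first right-hand term are immediate, because their prefactors do not depend on $g$: summing gives $\sum_{g\geq0}\frac{n+1}{6}Q_g^n(x)s^g=\frac{n+1}{6}H_n$ and $\sum_{g\geq0}\frac{(1+x)(2n-1)}{3}Q_g^{n-1}(x)s^g=\frac{(1+x)(2n-1)}{3}H_{n-1}$. For the term carrying $Q_{g-1}^{n-2}(x)$, the key point is the index shift: setting $g=g'+1$ yields $\sum_{g\geq0}Q_{g-1}^{n-2}(x)s^g=s\sum_{g'\geq0}Q_{g'}^{n-2}(x)s^{g'}=s\,H_{n-2}$, which is precisely the origin of the extra factor $s$ appearing in the statement.

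The only mildly less mechanical step is the double convolution, and here the thing to notice is that the inner sum is a convolution in the genus. Multiplying by $s^g=s^i s^j$ (legitimate since $i+j=g$) therefore turns it into a Cauchy product of two genus generating functions, so that $\sum_{g\geq0}s^g\sum_{i+j=g}Q_i^{k-1}(x)Q_j^{\ell-1}(x)=\bigl(\sum_i Q_i^{k-1}(x)s^i\bigr)\bigl(\sum_j Q_j^{\ell-1}(x)s^j\bigr)=H_{k-1}H_{\ell-1}$, while the remaining finite sum over $k+\ell=n$ with its $g$-independent weights $(2k-1)(2\ell-1)$ passes through untouched. Assembling the four contributions gives the claimed recurrence, and the initial condition follows by reading off $H_0(x,s)=\sum_{g\geq0}Q_g^0(x)s^g=x$ from $Q_g^0(x)=x\cdot{\bf 1}_{\{g=0\}}$. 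I do not expect any genuine obstacle in this argument: it is a routine repackaging of Theorem~\ref{thm:mainx}, and the single point requiring care is keeping the genus index shift and the Cauchy product straight.
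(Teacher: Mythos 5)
Your proposal is correct and is exactly the argument the paper has in mind: the paper presents this corollary as an immediate reformulation of Theorem~\ref{thm:mainx} obtained by multiplying by $s^g$ and summing over $g$, with the index shift producing the factor $s$ and the genus convolution becoming the product $H_{k-1}H_{\ell-1}$. Nothing further is needed.
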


\subsection{The bivariate generating functions $M_g(x;y)$}
\label{sec:gencor2}
We let  $M_g^{i,j}$ be defined as in Theorem~\ref{thm:three2} and we consider the bivariate generating function of rooted maps of genus $g$ by vertices (variable~$x$) and faces (variable $y$):
$$
M_g(x,y) = \sum_{i,j\geq 1} M_g^{i,j}x^iy^j.
$$ 
Bender, Canfield, and Richmond~\cite{BCRvf} showed that $M_g(x,y)$ is a rational function in the two parameters $p$ and $q$ such that:
\begin{eqnarray}\label{eq:birat}
x=p(1-p-2q), ~~ y=q(1-2p-q).
\end{eqnarray}
Arqu\`es and Giorgetti~\cite{AR} later refined this result and showed that 
\begin{eqnarray} \label{eq:formbirat}
M_g(x,y) = \frac{pq(1-p-q)P_g(p,q)}{\big((1-2p-2q)^2-4pq\big)^{5g-3}}
\end{eqnarray} where $P_g$ is a polynomial of total degree at most $6g-6$. However, similarly as in the case of univariate functions discussed in the previous section, the recursions given in~\cite{BCRvf, AR} to compute the polynomials $P_g$ involve additional variables and are complicated to use except for the very first values of $g$.

It would be natural to try to compute these polynomials by reformulating Theorem~\ref{thm:three2} as a recursive partial differential equation for the series $M_g(x,y)$ in the variables $(x,y)$ or $(u,v)$, in the same way as we did for the univariate series in Section~\ref{sec:gencor1}. However due to the bivariate nature of the problem, this approach does not seem to lead easily to an efficient way of computing the polynomials $P_g(u,v)$. 

Instead, we prefer to remark that since $P_g(u,v)$ has total degree at most $(6g-6)$ one can use the method of undetermined coefficients. In order to determine $P_g(u,v)$ we need to determine ${6g-5 \choose 2}$ coefficients, which can be done by computing the same number of terms of the sequence $M_g^{i,j}$. More precisely it is easy to see that computing $M_g^{i,j}$ for all $i,j$ such that $2\leq i+j\leq 6g-4$ gives enough data to determine the polynomial $P_g$, whose coefficients can then be obtained by solving a linear system. Since Theorem~\ref{thm:three2} gives a very efficient way of computing the numbers $M_g^{i,j}$, this technique seems more efficient (and much simpler) than trying to solve recursively the functional equations of the papers~\cite{BCRvf, AR}. We have implemented it on Maple and checked that we recovered the expressions of~\cite{AR} for $g=1,2,3$, and computed the next terms with no difficulty (for example computing $P_{10}(p,q)$ took less than a minute on a standard computer).

\section{Discussion and comparison with other approaches}
\label{sec:comments}

In this paper we have obtained simple recurrence formulas to compute the
numbers of rooted maps of genus, edges, and optionally vertices. It
gives rise in particular to efficient inductive formulas to compute the fixed genus
generating functions.
Let us now compare with other existing approaches to enumerate maps on
surfaces.

\medskip

\noindent{\bf Tutte/loop equations}. The most direct way to count maps on
surfaces is to perform a root edge decomposition, whose counting counterpart is 
known as \emph{Tutte
equation} (or \emph{loop equation} in the context of mathematical physics).
This approach enabled Bender and Canfield~\cite{BC} to prove the rationality of the
generating function of maps in terms of the parameter $\rho$ as discussed
in Section~\ref{sec:gencor1}. It was generalized to other classes of maps  {\it
via} variants of the kernel method (see, e.g.,
\cite{Gao}), and to the bivariate case in~\cite{BCRvf, AR} as discussed in Section~\ref{sec:gencor2}.
This approach has been considerably improved by the Eynard school
(see e.g. \cite{Eynard:book}) who  developed powerful machinery to solve
recursively these equations for many families of maps.
 
However, because they are based on Tutte equations, both the methods of~\cite{BC, Gao, BCRvf, AR} 
and~\cite{Eynard:book} require
working with generating functions of maps carrying an arbitrarily large number of
additional boundaries. To illustrate, in the special case of quadrangulations, 
the ``topological recursions'' given by
these papers enable one to compute inductively the generating functions 
$Q_g^{(p)}(t) \equiv Q_g(t;x_1,x_2,\dots,x_p)$ of rooted quadrangulations of genus $g$ carrying $p$
additional faces of arbitrary degree, marked by the additional variables
$x_1,x_2,\dots,x_p$. In order to be able to compute $Q_g^{(p)}(t)$ these
recursions take as an input the planar generating function $Q_0^{(g+k)}(t)$, so
one cannot avoid working with these extra variables (linearly many of them with
respect to the genus), even to compute the pure quadrangulation series
$Q_g^{(0)}$. 

Compared to this, the recurrence relations obtained in this paper
(Theorems~\ref{thm:mainx},~\ref{thm:main}, and~\ref{thm:recgf}) are much more efficient, as they
do not need to introduce any extra parameters.
In particular we can compute all univariate generating functions easily, even for large $g$.
However, of course, what we do here is a very special case: we consider only
bipartite quadrangulations keeping track of two or three parameters,
 whereas the aforementioned approaches enable one to
count maps with arbitrary degree distribution!

\medskip
\noindent{\bf Integrable hierarchies.} It has been known for some time in the
context of mathematical physics that multivariate generating functions of maps
are solution of integrable hierarchies of partial differential equations such
as the KP or the Toda hierarchy, see e.g.~\cite{Okounkov,Orlov, LandoZvonkine, GJ2008}.
However these hierarchies do not characterize their solutions (as shown by the
fact that many combinatorial models give different solutions), and one needs to
add extra information to compute the generating functions inductively.
Let us mention three interesting situations in which this is possible.
The first one is Okounkov's work on Hurwitz numbers~\cite{Okounkov}, where the
integrable hierarchy is the 2-Toda hierarchy, and the ``extra information''
takes the form of the computation of a commutator 
of operators 
in the infinite wedge
space~\cite[section 2.7]{Okounkov}.

 The second one is Goulden and Jackson's
recurrence for triangulations~\cite[Theorem 5.4]{GJ2008}, which looks very similar
to our main result. The starting equation is the same as ours
(Equation~\eqref{eq:kp}), but for the generating function of ordinary (non
bipartite) maps. In order to derive a closed equation from it, the authors
of~\cite{GJ2008} do complicated manipulations of generating functions, but what
they do could equivalently be done {\it via}
local manipulations similar to the ones we used in the proofs
of~\eqref{eq:11}, \eqref{eq:22}, \eqref{eq:1111}. We leave as an exercise to the
reader the task of reproving \cite[Theorem 5.4]{GJ2008} along these lines (and
with almost no computation).

The last one is the present paper, where in
addition to such local manipulations, we use an additional, very degenerate,
Tutte equation (Equation~\eqref{eq:tutteHexa}).
It seems difficult to find other cases than triangulations and
bipartite quadrangulations where the same techniques would apply, even by
allowing the use of more complicated Tutte equations. In our current
understanding, this situation is a bit mysterious to us.

To conclude on this aspect, let us observe that the equations obtained from
integrable hierarchies rely on the deep algebraic structure of the multivariate
generating series of combinatorial maps (and on their link with Schur functions). This structure
provides them with many symmetries that are not apparent in the combinatorial
world, and  we are far from understanding combinatorially the meaning of
these equations.
In particular, to our knowledge, the approaches based on integrable hierarchies are the only
ones that enable one to prove statements such as~\eqref{eq:tgrec}.
\medskip

\noindent{\bf Bijective methods.}
In the planar case ($g=0$) the combinatorial structure of maps is now well
understood thanks to bijections that relate maps to some kinds of decorated
trees. The topic was initiated by Schaeffer~\cite{Schaeffer, ChassaingSchaeffer}
and has been developed by many others. 
For these approaches, the simplest case turns out to be the one of
bipartite quadrangulations. In this case, the trees underlying the bijective
decompositions have a generating function given by~\eqref{eq:T}.

The bijective combinatorics of maps on other orientable surfaces is a more recent topic.
Using bijections similar to the ones in the planar case, one can prove
bijectively
rationality results for the fixed-genus generating function of
quadrangulations~\cite{CMS} or more generally fixed degree bipartite maps or
constellations~\cite{Chapuy:constellations}. However, with these techniques, one 
obtains rationality in terms of some auxiliary generating functions whose degree of
algebraicity is in general too high compared to the known non-bijective result.
See the footnote page~\pageref{footpage} for an example of this phenomenon in
the case of quadrangulations.
Moreover, although the asymptotic form~\eqref{eq:asympt} is well explained
by these methods~\cite{CMS, Chapuy:constellations, Chapuy:PTRF}, they
do not provide any information on the numbers $t_g$, and do not explain the
relation~\eqref{eq:tgrec}.
In a different direction, much progress has been made in recent years on the combinatorial understanding of one-face maps, and we now have convincing bijective proofs, for example, of the Harer-Zagier recurrence formula (Corollary~\ref{cor:HZ}). See e.g.~\cite{CFF} and references therein.

To conclude,  we are still far from being able to prove exact counting statements
such as Theorem~\ref{thm:main} or Theorem~\ref{thm:three2} combinatorially. However, the fact that Theorem~\ref{thm:three2} contains the Harer-Zagier formula, which is well understood combinatorially, as a special case, opens an interesting track of research.
Moreover, the history of bijective methods for maps tells us two things. First,
that when a bijective approach exists to some map counting problem, the case of bipartite quadrangulations
is always the easiest one to start with. Second, that before trying to find
bijections, it is important to know \emph{what to prove} bijectively.
Therefore we hope that, in years to come, the results of this paper will play a role  guiding new
developments of the bijective approaches to maps on surfaces.

\bibliographystyle{plain}
\bibliography{document}

\end{document}